\documentclass[a4paper,11pt,notitlepage,reqno]{article}
\usepackage[english]{babel}
\usepackage[latin1]{inputenc}
\usepackage{amssymb,amsthm,amsmath}
\usepackage{mathtools}
\usepackage{mathrsfs}
\usepackage{enumerate}
\usepackage{authblk}
\usepackage{stackrel}
\usepackage{eucal}
\usepackage{color}
\usepackage{xcolor}
\usepackage{graphics}
\usepackage{subcaption}
\usepackage{geometry}
\usepackage{hyperref}

\usepackage{xcolor}
\hypersetup{
	colorlinks,
	linkcolor={red!60!black},
	citecolor={green!60!black},
	urlcolor={blue!60!black}
}

\geometry{top=3.25cm,bottom=4.25cm,left=3.75cm,
	right=3.75cm,heightrounded}

\linespread{1}
\numberwithin{equation}{section}
%

%

\newcommand{\R}{{\mathbb R}}

\newtheorem*{remark*}{Remark}

\newtheorem{prop}{Proposition}
\newtheorem{thm}{Theorem}
\newtheorem{lem}{Lemma}
\newtheorem{set}{Setting}
\newtheorem{prob}{Problem}

\theoremstyle{definition}
\newtheorem{rem}{Remark}
\newtheorem{ex}{Example}

\begin{document}
\title{On a functional-differential equation \\
			with quasi-arithmetic mean value}
\author{Shokhrukh Ibragimov
	\\
	\small 	Department of Mathematics, National University of Uzbekistan\\
	Almazar street Universitetskaya 4, 100174 Tashkent, Uzbekistan\\
	E-mail: shohruh.i.95@gmail.com
}

\maketitle

\begin{abstract}
\noindent
In this paper we describe all differentiable functions $\varphi, \psi  \colon E\to\R$ satisfying the functional-differential equation
  \begin{equation}
  [\varphi(y)-\varphi(x)]\psi '\bigl(h(x,y)\bigr)=[\psi(y)-\psi(x)]\varphi '\bigl(h(x,y)\bigr),
  \end{equation}
for all $x,y\in E$, $x<y$, where $E \subseteq \R$ is a nonempty open interval, $h(\cdot, \cdot)$ is a quasi-arithmetic mean, i.e. $h(x,y)=H^{-1}(\alpha H (x)+\beta H (y)), x,y\in E$, for some differentiable and strictly monotone function $H \colon E \to H(E)$ and fixed $\alpha, \beta\in (0,1)$ with $\alpha+\beta=1$.
\end{abstract}

\maketitle

\section{Introduction}
Given a nonempty open interval $E\subseteq \R$ and differentiable functions $\varphi,\psi \colon E \to \R$, the Cauchy Mean Value Theorem (MVT) states that, for any interval $(a,b)\subset E$ there exists a point $c$ in $(a,b)$ such that
\begin{equation} \label{Cauchy}
[\varphi(b)-\varphi(a)]\,\psi '(c) = [\psi(b)-\psi(a)]\,\varphi'(c).
\end{equation}
A particular case is the Lagrange MVT, in which case $\psi$ is the identity function and hence, \eqref{Cauchy} reads as
\begin{equation}\label{Lagrange}
  \varphi(b)-\varphi(a)=(b-a)\varphi'(c).
\end{equation}
It is an interesting question to ask for which $\varphi$ and $\psi$ the mean value $c$ in \eqref{Cauchy} or~\eqref{Lagrange} depends on the endpoints $a$ and $b$ in a prescribed way. This is the subject of this note. More precisely, we are concerned with the following problem.
\begin{prob}\label{the-problem}
Let $E \subseteq \R$ be a nonempty open interval, let  $H \colon E \to H(E)$ be a differentiable and strictly monotone function and let $h \colon E \times E \to \R$ satisfy for all $x, y \in E$ that $h(x,y)=H^{-1}(\alpha H (x)+\beta H (y))$, where $\alpha, \beta \in  (0,1)$ satisfy $\alpha+\beta=1$.
Find all pairs $(\varphi, \psi)$ of differentiable functions $\varphi, \psi \colon E\to\R$ which satisfy for all $x, y \in E$, with $x < y$, that
\begin{equation}
\label{eqn:cauchy.gen}
  [\varphi(y)-\varphi(x)]\psi '\bigl(h(x,y)\bigr)=[\psi(y)-\psi(x)]\varphi '\bigl(h(x,y)\bigr).
\end{equation}
\end{prob}

For the case of the Lagrange MVT (i.e.~when $\psi(x)=x$) with $H(x)=x$ and $\alpha=\beta=\frac{1}{2}$, this problem was considered first by Haruki~\cite{Haruki} and independently by Acz\'{e}l~\cite{Aczel}, who showed that the quadratic functions are the only solutions to \eqref{eqn:cauchy.gen}. This particular case serve as
a starting point for various functional equations, see e.g. Sahoo and Riedel~\cite{Sahoo}. More general functional equations have been considered even in the abstract setting of groups by several researchers including Kannappan~\cite{Kannappan}, Ebanks~\cite{Ebanks} and Fechner \& Gselmann~\cite{Fechner-Gselmann}. Moreover, the result of Acz\'{e}l and Haruki has been generalized for higher order  Taylor expansion by Sablik~\cite{Sablik}.

The 
functional-differential equation \eqref{eqn:cauchy.gen} was solved by Balogh, Ibrogimov \& Mityagin~\cite{BIM} for the case $H(x)=x, E=\R$, under the assumption that $\varphi$ and $\psi$ are three times differentiable functions. It is worth to mention that the method of \cite{BIM} applies to the case of arbitrary open interval $E$ without much additional effort. Recently, Lukasik~\cite{Lukasik} has combined the method of \cite{BIM} together with an indirect approach to provide all solutions of \eqref{eqn:cauchy.gen} for the case of $H(x)=x$ and an arbitrary open interval $E\subseteq \R$ by only requiring the differentiability of the unknown functions. For the general case, Kiss \& P\'{a}les~\cite{Pales2018} has provided all solutions of the equation \eqref{eqn:cauchy.gen} under the assumption that $\psi'$ does not vanish on $E$ and that $\frac{\varphi'}{\psi'}$ is invertible.

In this note we provide a complete solution to Problem~\ref{the-problem} by a different and self-contained approach. Our method is heavily inspired by that of \cite{BIM} and based on the tricks analogous to the ones of \cite{Pales2018} and \cite{Lukasik}. Our main result reads as follows.

\begin{thm}
  \label{thm:sym.cauchy.main}
Assume the setting of Problem~{\upshape\ref{the-problem}}. If $\varphi$ and $\psi$ solves the functional-differential equation \eqref{eqn:cauchy.gen}, then one of the following possibilities holds:

\begin{enumerate}[{\upshape(a)}]

\item $\{1, \varphi, \psi\}$ are linearly dependent on every open subinterval of $E$, where $\psi'$ does not vanish. Moreover, if $J=H(E)$ is a semi-infinite interval, then $\{1, \varphi, \psi\}$ are linearly dependent on $E$;

\item $\varphi,\psi \in \mathrm{span} \{1,H,H^2\}$ on $E$;

\item there exists a non-zero real number $\mu$ such that
\[
\varphi,\psi \in \mathrm{span} \{1,e^{\mu H}, e^{-\mu H}\}\quad\text{on}\quad E;
\]

\item there exists a non-zero real number $\mu$ such that
\[
\varphi,\psi \in \mathrm{span} \{1,\sin(\mu H), \cos(\mu H)\}\quad\text{on}\quad E.
\]
\end{enumerate}

\end{thm}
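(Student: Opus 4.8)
\emph{Plan.} The first move is to remove the inner function $H$ by a change of variables and thereby reduce Problem~\ref{the-problem} to a \emph{weighted arithmetic} mean. Setting $J=H(E)$, $u=H(x)$, $v=H(y)$ and introducing $F=\varphi\circ H^{-1}$, $G=\psi\circ H^{-1}$ on $J$, one has $\varphi'(h(x,y))=F'(\alpha u+\beta v)\,H'(h(x,y))$ and likewise for $\psi$, so the common factor $H'(h(x,y))$ cancels and \eqref{eqn:cauchy.gen} becomes
\begin{equation}\label{eqn:reduced}
[F(v)-F(u)]\,G'(\alpha u+\beta v)=[G(v)-G(u)]\,F'(\alpha u+\beta v)
\end{equation}
for $u,v\in J$. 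Membership of $\varphi,\psi$ in $\mathrm{span}\{1,H,H^2\}$, in $\mathrm{span}\{1,e^{\mu H},e^{-\mu H}\}$, in $\mathrm{span}\{1,\sin\mu H,\cos\mu H\}$, and linear dependence of $\{1,\varphi,\psi\}$, correspond exactly to the same statements for $\{1,F,G\}$ with $H$ replaced by the identity. Hence it suffices to solve \eqref{eqn:reduced}, i.e.\ the case $H=\mathrm{id}$ with weights $\alpha,\beta$. (The only subtlety is that $H^{-1}$ is differentiable only away from the critical points of $H$; I would carry out the reduction on the open set where $H'\neq0$ and extend the conclusions by continuity, since membership in the listed spans is a closed condition.)

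The engine of the proof is a single differentiation trick adapted to the asymmetric weights. I pass to the coordinates $s=\alpha u+\beta v$ (the mean itself) and $t=v-u$, a linear change with Jacobian $\alpha+\beta=1$, so that $u=s-\beta t$, $v=s+\alpha t$ and \eqref{eqn:reduced} reads
\begin{equation}\label{eqn:st}
[F(s+\alpha t)-F(s-\beta t)]\,G'(s)=[G(s+\alpha t)-G(s-\beta t)]\,F'(s).
\end{equation}
The point of this choice is that $F'(s)$ and $G'(s)$ do not depend on $t$. Applying $\partial_t^{\,n}$ at $t=0$ to \eqref{eqn:st} therefore yields, for every $n\geq1$ and every $s$,
\begin{equation}\label{eqn:coeff}
[\alpha^{n}-(-\beta)^{n}]\bigl(F^{(n)}(s)G'(s)-G^{(n)}(s)F'(s)\bigr)=0.
\end{equation}
Since $\alpha,\beta>0$, the scalar $\alpha^{n}-(-\beta)^{n}$ equals $\alpha^{n}+\beta^{n}>0$ for odd $n$ and $\alpha^{n}-\beta^{n}$ for even $n$, so it vanishes precisely for even $n$ with $\alpha=\beta$. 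This one family of identities drives the entire classification.

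Write $D=F''G'-F'G''$ (the Wronskian of $F',G'$). If $\alpha\neq\beta$, then \eqref{eqn:coeff} with $n=2$ forces $D\equiv0$, hence $F'/G'$ is locally constant and $\{1,F,G\}$ is linearly dependent: only possibility~(a) survives. (Alternatively, and with less regularity, \eqref{eqn:reduced} makes the divided difference $\tfrac{F(v)-F(u)}{G(v)-G(u)}=r(\alpha u+\beta v)$ symmetric in $u,v$, so $r(\alpha u+\beta v)=r(\beta u+\alpha v)$ forces $r=F'/G'$ constant when $\alpha\neq\beta$.) If $\alpha=\beta$, the even-$n$ constraints drop out and \eqref{eqn:coeff} survives only for odd $n$. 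The case $n=3$ gives $F'''G'-G'''F'=0$, i.e.\ $D'\equiv0$, so $D$ is a constant $c_0$ and, where $F',G'\neq0$, $F'''/F'=G'''/G'=:\theta$, meaning $F'$ and $G'$ both solve the common equation $y''=\theta y$. The case $n=5$, after substituting $y''=\theta y$, reduces to $2\theta'\,D=0$. Thus if $c_0\neq0$ (the non-degenerate regime) then $\theta\equiv\kappa$ is constant, $F',G'$ span the solution space of $y''=\kappa y$, and integrating once and classifying by the sign of $\kappa$ gives $F,G\in\mathrm{span}\{1,t,t^{2}\}$ for $\kappa=0$, $\mathrm{span}\{1,e^{\mu t},e^{-\mu t}\}$ for $\kappa=\mu^{2}>0$, and $\mathrm{span}\{1,\sin\mu t,\cos\mu t\}$ for $\kappa=-\mu^{2}<0$, i.e.\ cases (b), (c), (d); whereas $c_0=0$ makes $F',G'$ linearly dependent and lands in (a).

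The main obstacle is regularity: the theorem assumes only differentiability, whereas \eqref{eqn:coeff} for $n=5$ needs $F,G\in C^{5}$. I would treat this as in \cite{BIM} and \cite{Lukasik}: on any open subinterval where the solution is non-degenerate (where $G'\neq0$ and $\{1,F,G\}$ is locally independent) the equation exhibits $r=F'/G'$ as a ratio of finite differences of the continuous functions $F,G$, hence $r$ is continuous, and a bootstrap then upgrades $F,G$ to $C^{\infty}$ there, legitimizing the computation above. Finally I would reassemble the global picture: since the functions in (b)--(d) are real-analytic, a single non-degenerate piece forces $F,G$ to coincide with one such analytic triple on all of $J$, while the degenerate locus yields the dependence statement of~(a); the refinements that dependence holds on every subinterval where $\psi'\neq0$, and globally when $J$ is semi-infinite, follow from a connectedness/continuation argument on the vanishing set of $G'$. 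Reverting through $H$ then delivers the four alternatives as stated.
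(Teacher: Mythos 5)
Your overall architecture coincides with the paper's: the reduction $F=\varphi\circ H^{-1}$, $G=\psi\circ H^{-1}$ to the weighted arithmetic mean, the coordinates $s=\alpha u+\beta v$, $t=v-u$, and the family of identities $[\alpha^{n}-(-\beta)^{n}]\bigl(F^{(n)}G'-G^{(n)}F'\bigr)=0$ are exactly the engine behind the paper's local analysis --- the paper outsources the $n=2$ computation (asymmetric case) and the $n=3,5$ computation (symmetric case, leading to $y''=\kappa y$ and the quadratic/exponential/trigonometric trichotomy) to \cite[Propositions 6 and 8]{BIM}, which you re-derive correctly; your regularity-free divided-difference argument $r(\alpha u+\beta v)=r(\beta u+\alpha v)$ for $\alpha\neq\beta$ is a nice alternative in the spirit of \cite{Pales2018} and \cite{Lukasik}. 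Your bootstrap sketch is also the right mechanism, although the nontrivial point is hidden in ``as in [BIM]'': differentiability of $v=f/g$ comes cheaply from the divided-difference representation, but upgrading $g$ itself requires producing, near each $t\in I$, a point $b_0$ with $v(\alpha x+\beta b_0)-v(x)\neq 0$ so that $g$ can be solved for explicitly; this is the content of Lemmas~\ref{lemma:set} and~\ref{lem:A:function} and equation \eqref{6} in the paper.

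The genuine gap is in your local-to-global step. You assert that ``since the functions in (b)--(d) are real-analytic, a single non-degenerate piece forces $F,G$ to coincide with one such analytic triple on all of $J$.'' This inference is not available: $F$ and $G$ are only assumed differentiable, and they are known to be smooth (hence of an explicit analytic form) only on the one component of $U_g=\{G'\neq 0\}$ where non-degeneracy holds; real-analytic continuation says nothing about their behaviour across zeros of $G'$ or on other components, where a priori $\{1,F,G\}$ could be dependent or of a different type. The paper's Lemma~\ref{qet.qet} supplies precisely the missing idea: it uses the functional equation itself, with $x$ ranging near the right endpoint $q$ of the piece and $h=\varepsilon$ fixed, to express $G(x+h)$ \emph{outside} $(p,q)$ through data inside, thereby propagating the explicit closed form past $q$ and past the (finitely many, locally) critical points of $G$. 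Similarly, for the semi-infinite refinement in alternative (a), ``a connectedness/continuation argument on the vanishing set of $G'$'' is not yet a proof: one must match the dependence constants $c_\sigma$ in $f=c_\sigma g$ across \emph{distinct} components of $U_g$ (Proposition~\ref{prop:X}) and dispose of the degenerate configuration $U_f\cap U_g=\emptyset$ (Proposition~\ref{prop:cauchy.gen.U_g<>empty}); Example~\ref{ex1} shows the conclusion is false on bounded intervals, so any such argument must use the unboundedness of $J$ in an essential way, which your sketch never does. (A smaller soft spot: your extension ``by continuity'' across zeros of $H'$ needs the coefficients obtained on different components of $\{H'\neq 0\}$ to agree, which is again a continuation problem, not a consequence of the span condition being closed.)
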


\begin{rem}
  It is easy to check that any of the cases (a)$-$(d) of the theorem indeed provides a solution to \eqref{eqn:cauchy.gen}.
\end{rem}
\begin{rem}
  For $H(x)=x^p$ with $p\in \R \backslash \{0\}$ and $E=[0,\infty)$, the function $h(\cdot,\cdot)$ reads as the power mean (or $p$-mean), i.e.
  \begin{equation*}
    h(x,y)=\bigl( \alpha x^p+\beta y^p \bigr)^{\frac{1}{p}}, \quad x,y\in [0,\infty),
  \end{equation*}
and Theorem~\ref{thm:sym.cauchy.main} solves the corresponding problem mentioned in \cite{BIM}.
\end{rem}

We emphasize that the even more general problem, where the derivatives of the unknown functions in \eqref{eqn:cauchy.gen} are replaced by other two unknown functions, is still an unsolved problem 
as mentioned in the Newsletter of the European Mathematical Society in 2016 (see \cite{rassias2016solved}, \cite{Sahoo}).

The rest of the note is organized as follows. In Section~\ref{sec2} we prove that the functions $\varphi$ and $\psi$ which solve Problem~\ref{the-problem} are either linearly dependent or infinitely differentiable on any subinterval of $E$ in which $\psi'$ does not vanish. In Section~\ref{sec3} we provide a result which will allow us to extend the linearly independency intervals to the whole set. In Section~\ref{sec4} we analyse the asymmetric case ($\alpha\neq\beta$). In Section~\ref{sec5} the symmetric case ($\alpha=\beta=1/2$) is considered and the proof of Theorem~\ref{thm:sym.cauchy.main} is completed.


\section{Infinite differentiability of unknown functions}
\label{sec2}

 We start with transforming Problem~\ref{the-problem} to a problem with a linear mean as follows. It is given that the function $H$ is strictly monotone and differentiable on $E$. Without loss of generality  we may assume that $H$ is strictly increasing. Hence, the inverse of $H$ is also strictly increasing, differentiable, and has a non-vanishing derivative on $J:=H(E)$. Substituting  $x \mapsto H^{-1}(a)$, $y \mapsto H^{-1}(b)$  in \eqref{eqn:cauchy.gen} and denoting  $F=\varphi \circ H^{-1}$ and $G=\psi \circ H^{-1}$, we get
\begin{equation*}
  [F(b)-F(a)]G'(\alpha a+\beta b)=[G(b)-G(a)]F'(\alpha a+\beta b)
\end{equation*}
for all $a,b \in J$. Therefore, Problem~\ref{the-problem} reduces to the following problem.

\begin{prob}\label{3}
Let $J \subseteq \R$ be a nonempty open interval and  let $\alpha, \beta\in (0,1)$ be fixed with $\alpha+\beta=1$. Find all pairs $(F,G)$ of differentiable functions $F,G \colon J \to \R$ which satisfy for all $a, b \in J$ that
\begin{equation}
\label{eq:prob2}
  [F(b)-F(a)]G'(\alpha a+\beta b)=[G(b)-G(a)]F'(\alpha a+\beta b).
\end{equation}
\end{prob}
In this section we present certain properties of the solutions of the functional-differential equation~\eqref{eq:prob2} on the interval where the derivative of $G$ does not vanish. For convenience, we assume the following setting throughout this section.

\begin{set}
	\label{setting}
Let $J \subseteq \R$ be a nonempty open interval, let $\alpha, \beta\in (0,1)$ be fixed with $\alpha+\beta=1$, let $F,G \colon J \to \R$ be differentiable functions, with derivatives $F' = f$, $G'=g$, satisfy for all $a, b \in J$ that
\begin{equation}\label{new-eq}
[F(b)-F(a)]g(\alpha a+\beta b)=[G(b)-G(a)]f(\alpha a+\beta b),
\end{equation}
let  $I \subseteq J$ be  a nonempty interval such that  for all $x\in I$ it holds that $g(x)\neq 0$, and let $v \colon I \to \R$ satisfy for all $x \in I$ that $v(x) = \frac{f(x)}{g(x)}$.
\end{set}

\begin{lem}
\label{lem:elem}
	For all $a, b \in I$ it holds that
	\begin{equation}\label{new-eq3}
	\beta g(a)(v(\alpha a+\beta b)-v(a))=\alpha g(b)(v(b)-v(\alpha a+\beta b)).
\end{equation}
\end{lem}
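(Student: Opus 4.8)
The plan is to first reduce the functional-differential identity \eqref{new-eq} to a clean first-order difference relation valid on $I$, and then to differentiate that relation in each variable and eliminate the common second-order term.

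First I would localize everything to $I$. For $a, b \in I$ the value $\alpha a + \beta b$ is a convex combination of points of the interval $I$, hence lies in $I$, so $g(\alpha a + \beta b) \neq 0$. Writing $f = vg$ on $I$ and inserting $f(\alpha a + \beta b) = v(\alpha a + \beta b)\, g(\alpha a + \beta b)$ into \eqref{new-eq}, I can cancel the nonzero factor $g(\alpha a + \beta b)$ to obtain
\begin{equation}\label{star}
F(b) - F(a) = v(\alpha a + \beta b)\,\bigl[G(b) - G(a)\bigr] \qquad \text{for all } a, b \in I.
\end{equation}

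Before differentiating \eqref{star} I must know that $v$ is differentiable. Since $g = G'$ is a derivative it enjoys the Darboux (intermediate value) property, and as it never vanishes on the interval $I$ it cannot change sign there; hence $G$ is strictly monotone, in particular injective, on $I$. Therefore $G(b) \neq G(a)$ whenever $a \neq b$, and \eqref{star} can be solved as $v(\alpha a + \beta b) = \frac{F(b) - F(a)}{G(b) - G(a)}$, a quotient of differentiable functions with non-vanishing denominator. This exhibits $v$ as differentiable on the interior of $I$, which is precisely what licenses the differentiation below (the diagonal case $a = b$ of the lemma is trivial, since both sides of \eqref{new-eq3} vanish).

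I would then differentiate \eqref{star} separately in $a$ and in $b$. Writing $m = \alpha a + \beta b$ and using $f(a) = v(a)g(a)$ and $f(b) = v(b)g(b)$, the $a$-derivative rearranges to
\[
g(a)\,\bigl(v(m) - v(a)\bigr) = \alpha\, v'(m)\,\bigl[G(b) - G(a)\bigr],
\]
and the $b$-derivative to
\[
g(b)\,\bigl(v(b) - v(m)\bigr) = \beta\, v'(m)\,\bigl[G(b) - G(a)\bigr].
\]
Dividing the first by $\alpha$ and the second by $\beta$ isolates the common quantity $v'(m)\,[G(b) - G(a)]$, and equating the two resulting expressions gives $\beta g(a)\bigl(v(m) - v(a)\bigr) = \alpha g(b)\bigl(v(b) - v(m)\bigr)$, which is exactly \eqref{new-eq3}. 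The step I expect to be the main obstacle is not this final algebraic elimination but the justification of differentiability: a priori $v = f/g$ is merely a quotient of derivatives and need not even be continuous, so the Darboux-monotonicity argument that rewrites $v \circ (\text{mean})$ as the smooth quotient $\frac{F(b)-F(a)}{G(b)-G(a)}$ is the real content that makes the differentiation legitimate.
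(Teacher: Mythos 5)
Your proof is correct, but it takes a genuinely different route from the paper's. The paper never touches the regularity of $v$ at this stage: it substitutes $x=\alpha a+\beta b$, $h=b-a$ to rewrite \eqref{new-eq} as $[F(x+\alpha h)-F(x-\beta h)]g(x)=[G(x+\alpha h)-G(x-\beta h)]f(x)$ and then differentiates \emph{in the parameter $h$ only}, so that $f(x)$ and $g(x)$ sit as frozen constants and the only derivatives that appear are $F'=f$ and $G'=g$, which exist by hypothesis; transforming back and inserting $f=vg$ gives \eqref{new-eq3} immediately. You instead cancel $g(\alpha a+\beta b)\neq 0$ to isolate $v(\alpha a+\beta b)=\frac{F(b)-F(a)}{G(b)-G(a)}$, prove via the Darboux property that $G$ is strictly monotone so that $v$ is differentiable, and then differentiate separately in $a$ and in $b$, eliminating the second-order quantity $v'(m)[G(b)-G(a)]$ between the two resulting identities. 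Both arguments are sound, and your elimination is algebraically clean; the trade-off is that the paper's $h$-differentiation buys the lemma with zero regularity assumptions on $v$, whereas you must front-load the differentiability of $v$ --- which, interestingly, is exactly the quotient-representation argument the paper itself uses later, in the proof of Proposition~\ref{pro1} via \eqref{new-eq2}, so your route essentially pulls a piece of Proposition~\ref{pro1} forward rather than introducing machinery the paper avoids altogether. Two small points of care: since Setting~\ref{setting} allows $I$ to be non-open, your differentiations in $a$ and $b$ at an endpoint of $I$ are one-sided (legitimate here, because the identity holds on all of $I\times I$ and $F,G$ are differentiable on the open set $J$, while $m=\alpha a+\beta b$ is interior to $I$ whenever $a\neq b$); and you correctly dispose of the diagonal $a=b$, where both sides of \eqref{new-eq3} vanish, which your $v'$-based argument would otherwise not cover.
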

\begin{proof}
Using the transformation $\alpha a+\beta b \mapsto x$, $b-a \mapsto h$ the condition \eqref{new-eq} reads as
\begin{equation}\label{new-eq1}
[F(x+\alpha h)-F(x-\beta h)]g(x)=[G(x+\alpha h)-G(x-\beta h)]f(x)
\end{equation}
for all $x\in I$ and $h\in \R $ such that $x+\alpha h, x-\beta h\in I$. Differentiating both sides with respect to $h$ we obtain
\begin{equation}
[\alpha f(x+ \alpha h) + \beta f(x-\beta h) ] g(x) = [\alpha g(x+\alpha h) + \beta g(x -\beta h)] f(x)
\end{equation}
for all $x\in I$ and $h\in \R $ such that $x+\alpha h, x-\beta h\in I$. Applying the transformation  $h \mapsto b-a$, $x \mapsto \alpha a+\beta b$, we obtain
\begin{equation}
[\alpha f(b) + \beta f(a) ] g(\alpha a+\beta b) = [\alpha g(b) + \beta g(a)] f(\alpha a+\beta b)
\end{equation}
or, equivalently,
\begin{equation}
\beta [g(a)f(\alpha a+\beta b) - f(a)  g(\alpha a+\beta b) ] = \alpha [ f(b)g(\alpha a+\beta b)  -  g(b) f(\alpha a+\beta b) ]
\end{equation}
for all $a,b \in I$. By the definition of the function $v$ we obtain that
\begin{equation}
\label{vdefine}
\beta g(a)(v(\alpha a+\beta b)-v(a))=\alpha g(b)(v(b)-v(\alpha a+\beta b))
\end{equation}
for all $a,b \in I$. The proof of Lemma~\ref{lem:elem} is thus completed.
\end{proof}

\begin{lem}\label{lemma:set}
	If $v$ is not a constant function on $I$, then
$$
\{x \in I \colon \exists\, c \in \R \colon v |_{(x, \sup(I))} = c\} = \emptyset.
$$
\end{lem}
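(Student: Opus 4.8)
The plan is to prove the contrapositive: assuming the set is nonempty, I will show that $v$ must in fact be constant on all of $I$. So suppose there exist $x_0 \in I$ and $c \in \R$ with $v \equiv c$ on $(x_0, \sup(I))$, and write $s := \sup(I)$. The whole argument rests on identity \eqref{new-eq3} of Lemma~\ref{lem:elem},
\[
\beta g(a)\bigl(v(\alpha a + \beta b) - v(a)\bigr) = \alpha g(b)\bigl(v(b) - v(\alpha a + \beta b)\bigr), \qquad a,b \in I.
\]
The key observation is that if both $b$ and the mean $\alpha a + \beta b$ lie in the region $(x_0, s)$ on which $v$ is already known to equal $c$, then the right-hand side vanishes; since $g(a) \neq 0$ on $I$ and $\beta > 0$, the left-hand side then forces $v(a) = c$ as well.

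Next I would determine the range of $a$ to which this reasoning applies. Given $a \in I$, I need some $b \in (x_0, s)$ with $\alpha a + \beta b > x_0$; since $\sup_{b<s}(\alpha a + \beta b) = \alpha a + \beta s$ and $\alpha+\beta=1$, such a $b$ exists precisely when $a > \tfrac{x_0 - \beta s}{\alpha}$ (for finite $s$; if $s=+\infty$, a single sufficiently large $b$ works for every $a$). One checks $\tfrac{x_0 - \beta s}{\alpha} < x_0$ because $x_0 < s$, and that the mean automatically stays below $s$. Hence the constancy region is enlarged from $(x_0, s)$ to $\bigl(\tfrac{x_0 - \beta s}{\alpha}, s\bigr) \cap I$, strictly extending it to the left.

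To exhaust $I$, I would iterate this step. Setting $x_{n+1} := \tfrac{x_n - \beta s}{\alpha}$, the recurrence yields $x_n - s = \alpha^{-n}(x_0 - s)$, and since $0 < \alpha < 1$ and $x_0 - s < 0$ we obtain $x_n \to -\infty$; thus after finitely many steps the left endpoint drops below $\inf(I)$, giving $v \equiv c$ on the interior of $I$. (Equivalently, one may set $q := \inf\{x \in I : v|_{(x,s)} \equiv c\}$, note $v \equiv c$ on $(q,s)$, and derive a contradiction from $\tfrac{q-\beta s}{\alpha} < q$ unless $q = \inf(I)$.) Finally, if the right endpoint $s$ belongs to $I$, one more application of \eqref{new-eq3} with $b = s$ and any $a \in (q,s)$ makes the left-hand side vanish and, using $g(s)\neq 0$ and $\alpha>0$, forces $v(s)=c$; this yields $v \equiv c$ on all of $I$, contradicting that $v$ is non-constant.

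I expect the main obstacle to be the bookkeeping of the propagation itself: verifying that for the relevant values of $a$ one can genuinely select $b$ so that both $b$ and the mean $\alpha a + \beta b$ land in the already-constant region, and controlling the induced recurrence $x_{n+1} = \tfrac{x_n - \beta s}{\alpha}$ so that the constant region provably covers $I$. The boundary behaviour — whether $\sup(I)$ is finite or $+\infty$, and whether the endpoints of $I$ belong to $I$ — needs a little separate care, but in each case it reduces to one further use of the same identity.
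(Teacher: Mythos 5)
Your proposal is correct and takes essentially the same route as the paper: both arguments rest on applying \eqref{new-eq3} with $b$ chosen near $\sup(I)$ so that $b$ and the mean $\alpha a+\beta b$ lie in the region where $v$ is already constant, whence $g(a)\neq 0$ forces $v(a)=c$, and this propagates the constancy leftward until it exhausts $I$, contradicting non-constancy. You organize the propagation through the explicit recurrence $x_{n+1}=\frac{x_n-\beta s}{\alpha}$ and treat the endpoints of $I$ explicitly, whereas the paper sets $k=\inf(S)$ and extends by a fixed $\varepsilon$ around $k$, but this is only a difference in bookkeeping, not in the underlying idea.
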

\begin{proof}
We prove Lemma~\ref{lemma:set} by contradiction. For this we assume that
$$
S :=\{x \in I \colon \exists\, c \in \R \colon v |_{(x, \sup(I))} = c\} \neq \emptyset,
$$
then, there exists $t_0 \in S$ and let's denote $k:=\inf(S)$.
Note that $k\in [\inf (I), \sup (I))$. First, assume that $k>\inf (I)$. Then, there exists $b_0\in I$ and sufficiently small $\varepsilon>0$ such that
\begin{equation}\label{5}
\sup (I)>b_0>\max \{ k+\tfrac{\alpha}{\beta}\cdot \varepsilon, k+\varepsilon \} \quad \text{and} \quad k-\varepsilon \in I.
\end{equation}
This and Lemma~\ref{lem:elem} imply
for all $x\in (k-\varepsilon,k+\varepsilon)$  that
\begin{equation}
\label{eq:lem:beta:g}
\beta g(x)(v (\alpha x+\beta b_0)-v(x))=\alpha g(b_0)(v(b_0)-v (\alpha x+\beta b_0)).
\end{equation}
From \eqref{5} we have for all $x\in (k-\varepsilon,k+\varepsilon)$ that
\begin{equation}
\sup (I)>b_0>\alpha x+\beta b_0>\alpha(k-\varepsilon)+\beta b_0>k
\end{equation}
and therefore, $v(b_0)=v (\alpha x+\beta b_0)=v |_{(k,\sup (I))}$. This together with \eqref{eq:lem:beta:g} and the assumption that $\forall \, x \in I \colon g(x) \neq 0$
ensures for all $x\in (k-\varepsilon,k+\varepsilon)$ that
 $v(x)=v (\alpha x+\beta b_0)=v |_{(k,\sup (I))}$. Hence, we obtain $v|_{(k-\varepsilon, \sup (I))}\equiv const$ which contradicts to $k = \inf(S)$. Therefore, we get that $k=\inf(I)$. This and the fact that  $S$ is connected prove
that $S=I$, which, in turn, implies that $v$ is constant on $I$. This contradicts to the assumption that $v$ is not a constant function on $I$. The proof of Lemma~\ref{lemma:set} is thus completed.
\end{proof}

\begin{lem}
	\label{lem:A:function}
 Let $A \colon I \times I \to \R$ be a function such that $A(a,b)=v (\alpha a+\beta b)-v(a)$ for all $(a,b) \in I \times I$ and  assume that $v$ is not a constant function on $I$. Then for every $t\in I$,
  \begin{enumerate}[{\upshape(i)}]
  \item\label{item:A:1} there exists  $b_0 \in (t, \sup (I))$  such that $A(t,b_0)\neq 0$;
  \item\label{item:A:2}  there exist $b_0  \in (t, \sup (I))$ and $\varepsilon >0$ such that $t-\varepsilon, t+\varepsilon \in I$, $b_0 \in (t+\varepsilon, \sup (I))$ and $\forall \, x\in (t-\varepsilon, t+\varepsilon) \colon  A(x,b_0)\neq 0$.
\end{enumerate}
\end{lem}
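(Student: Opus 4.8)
The plan is to derive part~(i) directly from Lemma~\ref{lem:elem} together with Lemma~\ref{lemma:set}, and then to obtain part~(ii) from part~(i) by a continuity-and-openness argument.

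For part~(i), fix $t \in I$ and argue by contradiction. Suppose that $A(t,b) = v(\alpha t + \beta b) - v(t) = 0$ for every $b \in (t, \sup(I))$. I would feed this into Lemma~\ref{lem:elem} with $a = t$, which reads
\[
\beta g(t)\bigl(v(\alpha t + \beta b) - v(t)\bigr) = \alpha g(b)\bigl(v(b) - v(\alpha t + \beta b)\bigr).
\]
The left-hand side equals $\beta g(t) A(t,b) = 0$, so the right-hand side vanishes as well; since $\alpha \neq 0$ and $g(b) \neq 0$ for $b \in I$, this forces $v(b) = v(\alpha t + \beta b) = v(t)$ for all $b \in (t, \sup(I))$. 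Thus $v$ is constant on the entire tail $(t, \sup(I))$, so $t$ belongs to the set considered in Lemma~\ref{lemma:set}. As $v$ is assumed non-constant on $I$, this contradicts Lemma~\ref{lemma:set} and establishes part~(i).

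To prepare for part~(ii) I would first record that $v$ is continuous on $I$. Indeed, evaluating \eqref{new-eq1} at a point $x$ with $g(x) \neq 0$ gives
\[
v(x) = \frac{F(x + \alpha h) - F(x - \beta h)}{G(x + \alpha h) - G(x - \beta h)}
\]
for all small $h \neq 0$; here the denominator is nonzero because $\tfrac{1}{h}\bigl(G(x+\alpha h) - G(x-\beta h)\bigr) \to g(x) \neq 0$ as $h \to 0$. Fixing one such $h$ and letting $x$ vary over a small neighborhood (on which the denominator stays nonzero, by continuity of $G$), the right-hand side is a continuous function of $x$ because $F$ and $G$ are continuous; hence $v$ is continuous. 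Now, for part~(ii), fix $t \in I$ and use part~(i) to choose $b_0 \in (t, \sup(I))$ with $A(t,b_0) \neq 0$. For $x \in I$ the point $\alpha x + \beta b_0$ is a convex combination of $x, b_0 \in I$, hence lies in the interval $I$, so $x \mapsto A(x,b_0) = v(\alpha x + \beta b_0) - v(x)$ is defined and continuous on $I$. Continuity and $A(t,b_0) \neq 0$ yield $\varepsilon_1 > 0$ with $A(x,b_0) \neq 0$ for $x \in (t-\varepsilon_1, t+\varepsilon_1)$, and I would finish by setting $\varepsilon = \min\{\varepsilon_1,\, b_0 - t,\, \delta\}$, where $\delta > 0$ is chosen small enough that $(t-\delta, t+\delta) \subseteq I$ (possible since $I$ is open). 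This $\varepsilon$ simultaneously guarantees $t \pm \varepsilon \in I$, $b_0 \in (t+\varepsilon, \sup(I))$, and $A(x,b_0) \neq 0$ on $(t-\varepsilon, t+\varepsilon)$, as required.

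I expect the main obstacle to be the idea behind part~(i): one must notice that the hypothetical vanishing $A(t,\cdot) \equiv 0$ on the tail, once inserted into Lemma~\ref{lem:elem}, bootstraps into constancy of $v$ on the whole tail $(t,\sup(I))$, which is precisely the configuration ruled out by Lemma~\ref{lemma:set}. Part~(ii) is then routine, the only points requiring care being that the second argument $\alpha x + \beta b_0$ remains in $I$ (by convexity of the interval) and that $\varepsilon$ is small enough to meet all three constraints at once.
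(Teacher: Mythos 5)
Your proposal is correct and takes essentially the same route as the paper: part~(i) by contradiction, feeding the assumed vanishing of $A(t,\cdot)$ into Lemma~\ref{lem:elem} to force $v$ constant on $(t,\sup(I))$, which Lemma~\ref{lemma:set} forbids, and part~(ii) from continuity of $A$ together with~(i). Your explicit verification that $v$ (hence $A$) is continuous, via the quotient representation coming from \eqref{new-eq1}, is a detail the paper leaves implicit --- and a worthwhile one, since $f$ and $g$ are merely derivatives and need not be continuous a priori.
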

\begin{proof}
The claim in~\eqref{item:A:2} follows from the continuity of the function $A$ and the claim in~\eqref{item:A:1}. Therefore, it is enough to prove the claim in~\eqref{item:A:1}. We prove it by contradiction and for this we assume that  there exists $t_0\in I$ such that for all $b\in (t_0, \sup(I))$ it holds that
\begin{align}
\label{eq:A:0}
A(t_0,b)=v(\alpha t_0+\beta b)-v(t_0)=0.
\end{align}
Observe that  Lemma~\ref{lem:elem} implies for all $b\in (t_0, \sup(I))$ that
\begin{equation}
\label{eq:A:transform}
\beta g(t_0)(v(\alpha t_0+\beta b)-v(t_0))=\alpha g(b)(v(b)-v(\alpha t_0+\beta b)).
\end{equation}
Since $t_0, b\in I$ and $g|_I\neq 0$, using \eqref{eq:A:0} and \eqref{eq:A:transform} we obtain  for all $b\in (t_0, \sup(I))$ that
\begin{equation}
v(b)-v(\alpha t_0+\beta b)=0.
\end{equation}
This implies that $v(t_0)=v(b)$  for all $b \in (t_0,\sup(I))$  and thus $v|_{(t_0, \sup(I))} \equiv const$, 
contradicting to Lemma~\ref{lemma:set} since $v$ is not a constant function on $I$. This concludes the proof of Lemma~\ref{lem:A:function}.
\end{proof}

\begin{prop}\label{pro1}
On the interval $I$, either $\{ 1,F,G\}$ are linearly dependent or both $F$ and $G$ are infinitely differentiable.
\end{prop}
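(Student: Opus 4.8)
The plan is to run a dichotomy on whether the ratio $v=f/g$ is constant on $I$. If $v\equiv c$ on $I$ for some $c\in\R$, then $f=cg$ there, so $(F-cG)'=f-cg\equiv 0$; since $I$ is an interval, $F-cG$ equals some constant $d$ on $I$, and hence $F-cG-d\equiv 0$ with the nontrivial coefficient vector $(1,-c,-d)$. Thus $\{1,F,G\}$ are linearly dependent on $I$ and we land in the first alternative. It therefore remains to treat the case in which $v$ is not constant on $I$, and here I will show that $F$ and $G$ are infinitely differentiable.

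In the non-constant case I would set up two representation formulas that bootstrap against one another. The first comes from \eqref{new-eq} itself: whenever $\alpha a+\beta b\in I$ and $G(a)\neq G(b)$ we may divide to obtain
\begin{equation}\label{eq:pf.ratio}
v(\alpha a+\beta b)=\frac{F(b)-F(a)}{G(b)-G(a)}.
\end{equation}
Fixing $b=b_0$ and writing $a=a(w)=(w-\beta b_0)/\alpha$, an affine function of $w$, formula \eqref{eq:pf.ratio} expresses $v(w)$ near a given point as a ratio of values of $F$ and $G$. To legitimise this near an arbitrary $z\in I$ I first pick a good base point: the map $s\mapsto G(z+s)-G(z-\tfrac{\beta}{\alpha}s)$ vanishes at $s=0$ with derivative $g(z)(1+\tfrac{\beta}{\alpha})=\tfrac{1}{\alpha}g(z)\neq 0$, so for some small $s_0$ the pair $b_0=z+s_0$, $a_z=z-\tfrac{\beta}{\alpha}s_0$ satisfies $\alpha a_z+\beta b_0=z$ and $G(a_z)\neq G(b_0)$; by continuity of $G$ we get $G(a(w))\neq G(b_0)$ for $w$ near $z$, so \eqref{eq:pf.ratio} holds there and is as regular as $F$ and $G$. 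The second formula is \eqref{new-eq3} with $a\mapsto x$, $b\mapsto b_0$, solved for $g$:
\begin{equation}\label{eq:pf.g}
g(x)=\frac{\alpha g(b_0)}{\beta}\cdot\frac{v(b_0)-v(\alpha x+\beta b_0)}{v(\alpha x+\beta b_0)-v(x)}.
\end{equation}
Here the open-neighbourhood conclusion of Lemma~\ref{lem:A:function} supplies, for each $t\in I$, a point $b_0$ and an $\varepsilon>0$ for which the denominator $A(x,b_0)=v(\alpha x+\beta b_0)-v(x)$ is nonzero on $(t-\varepsilon,t+\varepsilon)$; since $b_0\in I$ forces $g(b_0)\neq 0$, \eqref{eq:pf.g} expresses $g$ near $t$ as something exactly as regular as $v$.

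The regularity then follows by induction on $m\geq 0$ with hypothesis $F,G\in C^m(I)$. The base case $m=0$ is immediate because differentiable functions are continuous. Assuming $F,G\in C^m(I)$, formula \eqref{eq:pf.ratio} (with the affine $a(w)$) gives $v\in C^m$ on a neighbourhood of each $z\in I$, hence $v\in C^m(I)$; then \eqref{eq:pf.g} gives $g\in C^m$ on a neighbourhood of each $t\in I$, hence $g\in C^m(I)$; consequently $f=vg\in C^m(I)$, and since $F$ and $G$ are antiderivatives of $f$ and $g$ we obtain $F,G\in C^{m+1}(I)$. Iterating yields $F,G\in C^\infty(I)$, which is the second alternative, completing the dichotomy.

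I expect the crux to be the two non-vanishing conditions licensing the divisions in \eqref{eq:pf.ratio} and \eqref{eq:pf.g}. The denominator in \eqref{eq:pf.g} is exactly what Lemmas~\ref{lemma:set} and~\ref{lem:A:function} were proved to control, whereas the denominator in \eqref{eq:pf.ratio} requires the short first-derivative argument above to manufacture a base point $b_0$ with $G(a_z)\neq G(b_0)$ using only $g(z)\neq 0$. Everything else is the routine bookkeeping of the induction, each turn of which trades one use of \eqref{eq:pf.ratio} and one of \eqref{eq:pf.g} for one more derivative of $F$ and $G$.
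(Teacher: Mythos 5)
Your proof is correct and takes essentially the same route as the paper: the same dichotomy on whether $v=f/g$ is constant (constancy giving the linear dependence $F-cG\equiv d$), the same two representation formulas (your \eqref{eq:pf.ratio} and \eqref{eq:pf.g} are the paper's \eqref{new-eq2} and \eqref{6}), the same appeal to Lemma~\ref{lem:A:function} to control the denominator, and the same bootstrap to $C^\infty$. The only cosmetic difference is that you license the division by $G(b_0)-G(a(w))$ via a local base-point argument using the nonzero derivative $\tfrac{1}{\alpha}g(z)$, whereas the paper deduces injectivity of $G$ on $I$ directly from the nonvanishing of $g$.
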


\begin{proof}
If $\{ 1,F,G\}$ are linearly dependent, then \eqref{new-eq} holds.  Assume that $\{ 1,F,G\}$ are linearly independent on $I$. This implies that $v$ is not a constant function on $I$. Next note that \eqref{new-eq} together with the transformation $\alpha a+\beta b \mapsto x$, $b-a \mapsto h$ implies that
\begin{equation}\label{new-eq2}
  v(x)=\frac{f(x)}{g(x)}=\frac{F(x+\alpha h)-F(x-\beta h)}{G(x+\alpha h)-G(x-\beta h)}
\end{equation}
for all $x\in I$ and $h\in \R $ such that $x+\alpha h, x-\beta h\in I$. The assumption that $g$ does not vanish on $I$ proves that $G$ is injective on $I$ and hence, it can be seen from \eqref{new-eq2} that the function $v$ is differentiable on $I$. Next, note that Lemma~\ref{lem:A:function} implies that for every $t\in I$ there exists $b_0 \in (t,\sup(I))$ and $\varepsilon >0$ such that $t-\varepsilon, t+\varepsilon \in I$, $b_0 \in (t+\varepsilon, \sup (I))$ and $v(\alpha x+\beta b_0)-v(x)\neq 0$ for all $x\in (t-\varepsilon, t+\varepsilon)$. This and \eqref{new-eq3} (with $a=x$ and $b=b_0$) show that for all $x\in (t-\varepsilon,t+\varepsilon)$ it holds that
  \begin{equation}\label{6}
  g(x)=\frac{\alpha}{\beta} \cdot \frac{g(b_0)(v(b_0)-v(\alpha x+\beta b_0))}{v(\alpha x+\beta b_0)-v(x)}.
\end{equation}
Since $v$ is differentiable,
 \eqref{6}  ensures that  $g$ is differentiable at $t\in I$. As $t$ was chosen arbitrarily in $I$ we obtain that $g|_I$ is differentiable. Hence, $f|_I=v \cdot g|_I$  is also differentiable. So we get both $F|_I$ and $G|_I$ are twice differentiable. This together with \eqref{new-eq2} show that $v$ is twice differentiable and,  from \eqref{6}, so is $g|_I$. Hence, $f|_I=v \cdot g|_I$ is also twice differentiable. This implies both $F|_I$ and $G|_I$ are three times differentiable. Repeating this bootstrapping argument, we obtain by induction argument that $F$ and $G$ are infinitely differentiable on $I$. This completes the proof of Proposition~\ref{pro1}.
\end{proof}

\section{Passing a local information to a global one}
\label{sec3}

In this section we present some properties of the solutions $(F, G)$ of the functional-differential equation \eqref{eq:prob2}. For convenience, we assume the following setting throughout this and the next sections.

\begin{set}
	\label{setting2}
	Let $J \subseteq \R$ be a nonempty open interval, let $\alpha, \beta\in (0,1)$ satisfy $\alpha+\beta=1$, let $F,G \colon J \to \R$ be differentiable functions with derivatives $F' = f$, $G'=g$ and which satisfy for all $a, b \in J$ that
	\begin{equation}\label{eq:setting2}
		[F(b)-F(a)]g(\alpha a+\beta b)=[G(b)-G(a)]f(\alpha a+\beta b).
	\end{equation}
\end{set}

We introduce the sets
\begin{align}\label{U_f,U_g}
U_f := \{ x \in J \colon f(x) \ne 0 \}, \quad U_g := \{ x \in J\colon g(x) \ne 0 \},
\end{align}
and also their complements $Z_f := J \setminus U_f$ and $Z_g := J \setminus U_g$. Observe that if $U_g$ is empty, i.e. $G$ is constant on $J$, then \eqref{eq:setting2} holds for trivial reasons (both sides are identically zero) for any differentiable function $F$. Similarly, if $F$ is constant then \eqref{eq:setting2} holds for any differentiable function $G$. Assume therefore that $U_g\neq\emptyset$. Then there is a sequence of mutually disjoint open intervals $\{I_\sigma\}_{\sigma\in\Sigma}$, $\Sigma\subset\mathbb{N}$, such that
\begin{align}
\label{rep:U_g=sum.of.itvs}
\displaystyle U_g=\bigcup_{\sigma\in\Sigma}I_{\sigma}.
\end{align}
%

\begin{prop}
\label{prop:cauchy.gen.U_g<>empty}
If $U_g\neq\emptyset$ but $U_f\cap U_g = \emptyset$ and $J$ is semi-infinite interval, then $U_f=\emptyset$, i.e. $f\equiv 0$ on $J$ and thus $F$ is constant.
\end{prop}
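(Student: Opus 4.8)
The plan is to exploit that the hypothesis $U_f\cap U_g=\emptyset$ degenerates the functional-differential equation \eqref{eq:setting2} into two pointwise implications, and then to play the forced constancy of $F$ against the forced constancy of $G$ near the finite endpoint of $J$. After the substitution $x\mapsto -x$ if necessary (which leaves the form of \eqref{eq:setting2} and all hypotheses invariant, merely reflecting $U_f,U_g$), I may assume $J=(\ell,\infty)$ with $\ell\in\R$. Now fix $a,b\in J$ and set $m=\alpha a+\beta b$. If $m\in U_g$, then $g(m)\neq 0$ while disjointness forces $f(m)=0$, so \eqref{eq:setting2} collapses to $[F(b)-F(a)]\,g(m)=0$ and hence $F(a)=F(b)$; symmetrically, if $m\in U_f$ then $g(m)=0$ and $f(m)\neq 0$, whence $G(a)=G(b)$.

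The core step is to show that $U_g\neq\emptyset$ forces $F$ to be constant on an interval that reaches $\ell$ and strictly overshoots the component of $U_g$ used. Pick a component $I_\sigma=(p,q)$ from the decomposition \eqref{rep:U_g=sum.of.itvs}; after possibly shrinking it I may assume $\ell<p<q$. I use a ``common $b$'' argument: given $a_1<a_2$ with $a_2-a_1<(q-p)/\alpha$, the two sets $\{\,b:\alpha a_i+\beta b\in(p,q)\,\}=\bigl((p-\alpha a_i)/\beta,\,(q-\alpha a_i)/\beta\bigr)$, $i=1,2$, overlap, and if their overlap meets $J$ then choosing a single $b$ in it and applying the first implication twice yields $F(a_1)=F(b)=F(a_2)$. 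A direct computation shows the overlap meets $J=(\ell,\infty)$ precisely when $a_1,a_2<R_1:=(q-\beta\ell)/\alpha$, and it is exactly here that the unboundedness of $J$ from above is used, since no upper constraint on $b$ is ever imposed. Consequently $F$ is locally constant, hence constant, on the connected interval $(\ell,R_1)$; moreover $R_1-q=\beta(q-\ell)/\alpha>0$, so $R_1>q$. In particular $f\equiv 0$ on $(\ell,R_1)$, and therefore $U_f\cap(\ell,R_1)=\emptyset$.

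I close by symmetry and a positional clash. Suppose, for contradiction, that $U_f\neq\emptyset$. Since \eqref{eq:setting2} is invariant under interchanging $(F,f)$ with $(G,g)$, the analogue of \eqref{rep:U_g=sum.of.itvs} decomposes $U_f$ into open intervals; pick a component $(p',q')$ with $\ell<p'<q'$. Running the core step with the roles of $F$ and $G$ exchanged (now using the second implication) shows $G$ is constant on $(\ell,R_1')$, where $R_1':=(q'-\beta\ell)/\alpha>q'$, so $g\equiv 0$ there and $U_g\cap(\ell,R_1')=\emptyset$. Combining the two emptiness statements with the inclusions $(p,q)\subseteq U_g$ and $(p',q')\subseteq U_f$ gives $p'\ge R_1>q$ and $p\ge R_1'>q'$; together with $p<q$ and $p'<q'$ this produces the impossible chain $p'>q>p>q'>p'$. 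Hence $U_f=\emptyset$, that is, $f\equiv 0$ and $F$ is constant.

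I expect the delicate point to be the core constancy step: verifying that the overlap of the two $b$-intervals genuinely intersects $J$ (this is exactly where the semi-infiniteness of $J$ enters, through the absence of an upper bound on $b$), and obtaining the strict overshoot $R_1>q$. These two quantitative facts are precisely what drive the final contradiction; by contrast, were $J$ bounded the reachable constancy interval would stall short of the endpoints and the positional clash would collapse. Everything else is routine bookkeeping, together with the observation that the whole scheme is symmetric in $(F,f)\leftrightarrow(G,g)$.
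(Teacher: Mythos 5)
Your proof is correct, and it takes a genuinely different route from the paper's. The paper starts the same way you do (on an interval $(p,q)\subset U_g$ disjointness forces $f\equiv 0$, so \eqref{eq:setting2} collapses to $F(x+\alpha h)=F(x-\beta h)$), but then it propagates constancy \emph{iteratively}: first upward from $x=q$ to $q+\frac{\alpha}{\beta}(q-p)$, then downward from $x=p$ in steps of length at least $\bigl(\frac{\beta}{\alpha}+1\bigr)(q-p)$ until the finite endpoint $\inf(J)$ is reached, and finally it runs a supremum argument on the set $S$ of right endpoints of constancy intervals: at a finite $k=\sup S$ it finds an interval beyond $k$ where $f\neq 0$, hence $g\equiv 0$, re-runs the whole propagation for $G$, and contradicts $g\neq 0$ on $(p,q)$. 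Your ``common $b$'' overlap computation replaces both the iteration and the supremum argument with a single quantitative step: the explicit cutoff $R_1=(q-\beta\ell)/\alpha$, with strict overshoot $R_1-q=\beta(q-\ell)/\alpha>0$, yields constancy of $F$ on all of $(\ell,R_1)$ in one stroke (correctly isolating that unboundedness of $J$ above removes any upper constraint on $b$, so the constancy interval reaches the finite endpoint $\ell$), and your endgame is the clean positional clash $p'\ge R_1>q$ together with $p\ge R_1'>q'$, rather than the paper's second pass through the propagation machinery. What your version buys: explicit constants, no bookkeeping of iterated extensions, a tidier reduction via the reflection $x\mapsto -x$ in place of the paper's ``analysed analogously'', and you sidestep the paper's somewhat delicate assertion that ``$f\neq 0$ in $(k,r)$ for some $r$''. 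One caveat: your selection of an open component $(p',q')$ of $U_f$ rests on the same implicit regularity (openness of $U_f$ and $U_g$) that the paper already assumes in writing the decomposition \eqref{rep:U_g=sum.of.itvs}, since derivatives need not be continuous; you are therefore on exactly the same footing as the paper here, but it is worth being aware that this is an assumption, not a freebie.
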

\begin{proof}
Assume that $\sup( J)=+\infty$. Since $U_g\neq\emptyset$, there is a non-empty interval $(p,q)\subset U_g$ such that $g(x)\neq 0$ on $(p,q)$ with $p>\inf (J)$ and $q<\sup (J)$ (otherwise we can choose ($p+\varepsilon, q-\varepsilon$) for some $0<\varepsilon<\frac{q-p}{2}$). Hence, $f(x)=0$ for all $x\in[p,q]$.
Then with the change of variables $(b-a) \mapsto h$, $(\alpha a+\beta b) \mapsto x$, \eqref{new-eq} yields
\begin{equation} \label{constancy}
F(x+\alpha h) - F(x-\beta h) = 0,
\end{equation}
for all $x\in [p,q]$, $h \in \R$ such that $x+\alpha h, x-\beta h \in J$.
%
%
%
We fix $x=q$ and choose~$h$ arbitrarily in $[0,\frac{q-p}{\beta}]$ so that $x-\beta h=q-\beta h \in [p,q]$ and $x+\alpha h = q+\alpha h  \in [q,q+\frac{\alpha}{\beta}(q-p)]$. Therefore, it follows from \eqref{constancy} that
\begin{equation*}
 F|_{[p,q+\frac{\alpha}{\beta}(q-p)]}=F|_{[q,q+\frac{\alpha}{\beta}(q-p)]}=F|_{[p,q]}\equiv const.
\end{equation*}
Similarly, if we fix $x=p$ and choose $h$ arbitrarily in $[0,(\frac{1}{\alpha}+\frac{1}{\beta})(q-p)]$, then $x+\alpha h=p+\alpha h \in [p,q+\frac{\alpha}{\beta}(q-p)]$ and 
$$x-\beta h=p-\beta h \in \Bigl(\max\Bigl\{\inf (J); p-(q-p)\Bigl(\frac{\beta}{\alpha}+1\Bigr)\Bigr\},p\Bigr].$$
This together with the last equality and \eqref{constancy} imply that
$$
F|_{\bigl(\max\bigl\{\inf (J); p-(q-p)\bigl(\frac{\beta}{\alpha}+1\bigr)\bigr\},\,q+\frac{\alpha}{\beta}(q-p)\bigr]}=F|_{[p,q]}\equiv const.
$$
It can be seen that, every time the constancy interval increases from below at least by the constant $(\frac{\beta}{\alpha}+1)(q-p)$, until we reach $\inf(J)$, and therefore repeating this technique, we eventually obtain
\begin{equation*}
  F|_{(\inf (J),k_1]}=F|_{[p,q]}\equiv const
\end{equation*}
for some $k_1>q$. Next let us consider the set
$$
S:=\{ x\in (q, +\infty) \colon F|_{(\inf (J),x]}=F|_{[p,q]}\equiv const\}.
$$
Since $k_1\in S$,  it is non-empty. Assume $k=\sup( S)<\sup (J)=+\infty$. Then $F|_{(\inf (J),k]}=F|_{[p,q]}$ and $f\neq 0$ in $(k,r)$ for some $r$.
By the assumption $U_f\cap U_g = \emptyset$, we get $g(x)=0$ for all $x\in [k,r]$. Using the above argument for $g$, we obtain that $G|_{(\inf (J),l]}\equiv const$ for some $l\ge r>k\ge q$. This, in turn, contradicts to the fact that $g\neq 0$ on $(p,q)\subset (\inf (J),l]$. Hence, it follows that $k=\sup (S)=+\infty$ and thus $f\equiv 0$ on $J$. The case $\inf (J)=-\infty$ can be analysed analogously.
\end{proof}

\begin{ex}\label{ex1}
  Let $J=(0,1)$ and consider the functions
\begin{equation*}
  F(x)=\left\{
         \begin{array}{ll}
           c_1, & \quad \hbox{$x \in (0,\frac{4}{5}]$}, \\[1ex]
           (x-\frac{4}{5})^2+c_1,  & \quad \hbox{$x \in (\frac{4}{5},1)$},
         \end{array}
         \right.
\end{equation*}
\begin{equation*}
  G(x)=\left\{
         \begin{array}{ll}
           (x-\frac{2}{5})^2+c_2, & \quad  \hbox{$x \in (0,\frac{2}{5})$}, \\[1ex]
           c_2, & \quad \hbox{$x \in [\frac{2}{5},1)$}.
         \end{array}
       \right.
\end{equation*}
\end{ex}

\noindent
It is easy to check that such $J$, $F$, and $G$ satisfy the equation \eqref{eq:setting2} for $\alpha=\beta=\frac{1}{2}$. Moreover, $U_g=(0,\frac{2}{5})\neq \emptyset$ and $U_g\cap U_f=\emptyset$ but $U_f=(\frac{4}{5},1)\neq \emptyset$.
Hence, the example shows that the statement of Proposition~\ref{prop:cauchy.gen.U_g<>empty} does not hold  if $J$ is a finite interval.

If $J$ is a semi-infinite interval, then Proposition~\ref{prop:cauchy.gen.U_g<>empty} implies that $U_f\cap U_g=\emptyset$ only when $U_f=\emptyset$ or $U_g=\emptyset$. From this fact we conclude the result below.

\begin{prop}
\label{prop:X}
Assume that
\begin{align}
\label{Uf_disj_Ug}
U_f\cap U_g \neq \emptyset
\end{align}
and consider the representation \eqref{rep:U_g=sum.of.itvs}. If $\inf (J)=-\infty$ or $\sup (J)=+\infty$ and $\{F, G, 1\}$ are linearly dependent as functions on $I_{\sigma}$ for every $\sigma\in\Sigma$, then $\{F, G, 1\}$ are linearly dependent on $J$.
\end{prop}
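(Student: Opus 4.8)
Without loss of generality I assume $\sup(J)=+\infty$ (the case $\inf(J)=-\infty$ is symmetric). The first observation is that \eqref{eq:setting2} is invariant under the replacement $(F,f)\mapsto(F-\lambda G,\,f-\lambda g)$ for any $\lambda\in\R$: upon substituting, the extra terms cancel because the left-hand side is linear in the increments of $F$ and the right-hand side is linear in $f$. Since $U_f\cap U_g\neq\emptyset$ by \eqref{Uf_disj_Ug}, I choose $x_0\in U_f\cap U_g$, let $I_{\sigma_0}$ be the component of $U_g$ containing $x_0$, and apply this substitution with $\lambda=f(x_0)/g(x_0)$. After relabelling I may therefore assume $f(x_0)=0$. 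Moreover, on every component $I_\sigma$ the non-vanishing of $g$ makes $G$ strictly monotone, so the assumed linear dependence of $\{1,F,G\}$ on $I_\sigma$ can be written $F=a_\sigma G+b_\sigma$ (the coefficient of $F$ cannot vanish, as $G$ is non-constant), whence $f=a_\sigma g$ there; the substitution has arranged $a_{\sigma_0}=0$, i.e.\ $F$ is constant on $I_{\sigma_0}$. Undoing the substitution at the end turns ``$F$ constant on $J$'' into the desired linear dependence of $\{F,G,1\}$, so it suffices to prove that the new $F$ is constant on $J$.

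Now the situation is almost that of Proposition~\ref{prop:cauchy.gen.U_g<>empty}: I have $f\equiv 0$ on the subinterval $I_{\sigma_0}$ of $U_g$. By that proposition together with the Remark following it, on a semi-infinite $J$ the function $F$ can fail to be constant only if $U_f\cap U_g\neq\emptyset$; in view of $f=a_\sigma g$ this intersection is non-empty exactly when some slope $a_\sigma\neq 0$. Hence $F$ is constant on $J$ as soon as every $a_\sigma$ vanishes, and the whole problem reduces to showing that $a_\sigma=0$ for every component $I_\sigma$.

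The tool for this is the rigidity identity coming from \eqref{eq:setting2}: for $m\in I_\sigma$ and any $a,b\in J$ with $\alpha a+\beta b=m$, dividing \eqref{eq:setting2} by $g(m)\neq 0$ and using $f(m)=a_\sigma g(m)$ gives $F(b)-F(a)=a_\sigma\,[G(b)-G(a)]$, i.e.\ $F-a_\sigma G$ takes equal values at the two ends of every such fibre. The plan is to propagate the constancy of $F$ outward from $I_{\sigma_0}$ exactly as in the proof of Proposition~\ref{prop:cauchy.gen.U_g<>empty}. When the advancing constancy interval meets a gap of $U_g$ (where $g\equiv 0$, so $G$ is constant), a gap on which $f\not\equiv 0$ is impossible: applying the fibre identity with the roles of $F$ and $G$ interchanged at a point of the gap where $f\neq 0$ (there $g=0$, so \eqref{eq:setting2} forces $G(a)=G(b)$ along the fibre) would make $G$ constant on a neighbouring component of $U_g$, contradicting $g\neq 0$ there; thus $f\equiv 0$ on the gap and $F$ passes across it unchanged. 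When instead the interval reaches a new component $I_\tau$, I apply the fibre identity with $m\in I_\tau$, pairing a point $b\in I_\tau$ against an already-covered point $a$ where $F$ is constant: this yields $a_\tau\,G(a)=\mathrm{const}$ as $a$ ranges over a subinterval of the covered region, forcing $a_\tau=0$ once that subinterval meets a component on which $G$ is non-constant.

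The delicate point, and the step I expect to be the main obstacle, is the reachability of this propagation: one must verify that the constancy of $F$, and with it the vanishing of the slopes, really does spread across gaps of arbitrary length and eventually exhausts all of $J$. This is precisely the bootstrap of Proposition~\ref{prop:cauchy.gen.U_g<>empty}, in which each step enlarges the constancy interval by a fixed proportion and the semi-infiniteness $\sup(J)=+\infty$ is what prevents the process from stalling; Example~\ref{ex1} shows that on a bounded interval the statement, and with it this propagation, genuinely fails. Granting the bootstrap, every $a_\sigma$ vanishes, Proposition~\ref{prop:cauchy.gen.U_g<>empty} makes $F$ constant on $J$, and undoing the substitution of the first paragraph shows that $\{F,G,1\}$ are linearly dependent on $J$, as claimed.
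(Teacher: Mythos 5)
Your opening reduction is correct and even tidies the problem: the substitution $(F,f)\mapsto(F-\lambda G,\,f-\lambda g)$ does preserve \eqref{eq:setting2}, the slopes $a_\sigma$ are well defined because $g\neq 0$ on $I_\sigma$ forces the coefficient of $F$ in any dependence relation to be nonzero, and the implication ``all $a_\sigma=0$ $\Rightarrow$ $F$ constant on $J$'' via Proposition~\ref{prop:cauchy.gen.U_g<>empty} is sound (it even bypasses any need for dependence on the components of $U_f$). But the heart of the proposition --- that the slopes on all components of $U_g$ coincide --- is exactly what you leave unproven: your last paragraph explicitly says ``granting the bootstrap.'' That is a genuine gap, not a routine verification, and two of the sub-steps you sketch do not work as stated. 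First, at a gap point $m$ with $g(m)=0\neq f(m)$, the fibre identity gives only $G(a)=G(b)$ for the single pair with $\alpha a+\beta b=m$; this does not make $G$ constant on a neighbouring component --- you would have to vary $m$ over a whole neighbourhood where $f\neq 0$ while keeping one endpoint fixed, and then check that the moving endpoint stays in $J$, which depends on which side of $m$ the component lies. Second, in your ``new component'' step you fix $m\in I_\tau$ and vary $a$; but then $b$ moves with $a$, so $F(b)-F(a)=a_\tau[G(b)-G(a)]$ does not yield ``$a_\tau G(a)=\mathrm{const}$'' --- you would need to fix $b$ and vary $(a,m)$ jointly, with fresh admissibility checks. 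Finally, you cannot simply import the bootstrap of Proposition~\ref{prop:cauchy.gen.U_g<>empty}: its rightward step is a contradiction argument that uses $U_f\cap U_g=\emptyset$ globally, which in your scheme is only available after all slopes are known to vanish --- the very thing being proved.

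The paper shows that no propagation is needed at all, and its device is the one your write-up is missing. Given two components $I_{\sigma_1}=(p_1,q_1)$ and $I_{\sigma_2}=(p_2,q_2)$ with $q_1\le p_2$ and $f=c_i g$ on $I_{\sigma_i}$, take the mean point $x\in I_{\sigma_2}$ and choose $h>0$ so large that $x-\beta h\in I_{\sigma_1}$; since $\sup(J)=+\infty$, the right endpoint $x+\alpha h$ lies in $J$ automatically, no matter how wide the intervening gaps are. Dividing \eqref{eq:setting2} by $g(x)\neq 0$ gives $F(x+\alpha h)-F(x-\beta h)=c_2[G(x+\alpha h)-G(x-\beta h)]$; holding $y=x+\alpha h$ fixed and differentiating with respect to $h$ (legitimate because $I_{\sigma_2}$ is open, so the identity holds for an interval of $h$) yields $f(y-h)=c_2\,g(y-h)$ with $y-h\in I_{\sigma_1}$, whence $(c_1-c_2)\,g(y-h)=0$ and $c_1=c_2$. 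Applied to each $I_\sigma$ paired with your normalized component $I_{\sigma_0}$ (putting the mean in whichever of the two lies to the right), this kills every slope $a_\sigma$ in one shot and completes your reduction. So your overall architecture is salvageable, but the ``delicate point'' you flag is not a technicality to be granted: it is the core of the proof, and the single differentiation-along-a-fibre argument above is what replaces your entire gap-crossing scheme.
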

\begin{proof}
Assume that $\sup (J)=+\infty$. For $\sigma_1, \sigma_2 \in \Sigma$ with $\sigma_1 \neq \sigma_2$, consider the intervals $I_{\sigma_1} := (p_1,q_1)$, $I_{\sigma_2}:=(p_2,q_2)$ with
\begin{align*}
p_1<q_1\leq p_2<q_2,
\end{align*}
and assume that $\{F, G, 1\}$ are linearly dependent on $I_{\sigma_1}$ and $I_{\sigma_2}$.
%
%
%
Then it follows that there are constants $c_1, c_2\in\R$ such that
\begin{align}\label{FA_1B_2}
f(x) = \begin{cases}
c_1g(x) \colon & x\in I_{\sigma_1} \\
c_2g(x)\colon & x\in I_{\sigma_2}
\end{cases}.
\end{align}
With the change of the variables $(b-a) \mapsto h$, $(\alpha a+\beta b) \mapsto x$, \eqref{eq:setting2} yields that
\begin{equation*}
[F(x+\alpha h)-F(x-\beta h)]\, g(x) = [G(x+\alpha h)-G(x-\beta h)]\, f(x)
\end{equation*}
for all $x, h\in\R$ such that $x+\alpha h, x-\beta h \in J$. Using the fact that $g$ does not vanish on $I_{\sigma_2}$ and \eqref{FA_1B_2}  we obtain for all $x\in I_{\sigma_2}$, $h>0$ with $x+\alpha h$, $x-\beta h \in J$ that
\begin{equation}
\label{eq:dependent}
  F(x+\alpha h)-F(x-\beta h) = c_2[G(x+\alpha h)-G(x-\beta h)].
\end{equation}
Next, let us fix $x\in I_{\sigma_2}$. Then there exists $h>0$ such that $x-\beta h \in I_{\sigma_1}$ and that $x+\alpha h \in J$. Denoting $x+\alpha h$ by $y$ and  differentiating \eqref{eq:dependent}  with respect to $h$ we get
\begin{align}\label{FFA_2}
f(y-h) = c_2g(y-h).
\end{align}
Hence, from \eqref{FA_1B_2} and \eqref{FFA_2} we have
\begin{equation*}
  0=(c_1-c_2)g(y-h).
\end{equation*}
But $y-h\in I_{\sigma_1}$, so $g(y-h)\neq 0$ and thus
\begin{align}\label{A_1=A_2}
c_2-c_1=0.
\end{align}
Since $\sigma_1, \sigma_2\in\Sigma$ were arbitrary, \eqref{A_1=A_2} together with \eqref{FA_1B_2}  imply
\begin{align}\label{fAg}
f(x)=cg(x) \quad \text{for some constant} \quad c\in\R \quad \text{and all} \quad x\in U_g.
\end{align}
On the other hand, by changing the roles of $F$ and $G$ in the above analysis, we come to the conclusion that
\begin{align}\label{gKf}
g(x)=kf(x) \quad \text{for some constant} \quad k\in\R \quad \text{and all} \quad x\in U_f.
\end{align}
By \eqref{Uf_disj_Ug} there is an $x_0\in U_g\cap U_f$ and hence, $ck=1$. Therefore,  $c \neq 0$ and $k \neq 0$.  But then \eqref{fAg} implies  $U_g\subseteq U_f$ and \eqref{gKf} implies $U_f\subseteq U_g$. Therefore, $U_g=U_f$ and $Z_g=Z_f$. The latter means that for $x\in Z_g=Z_f$ we have that $f(x)=0= cg(x)$ and $g(x)=0=kf(x)$. Hence, with \eqref{fAg} and \eqref{gKf} these identities are valid on the entire $J=U_f\cup Z_f=U_g\cup Z_g$. In particular, it follows that $\{F, G, 1\}$ are linearly dependent on $J$. The case $\inf (J)=-\infty$ can be analysed analogously.
\end{proof}

\section{Main result for the asymmetric case}
\label{sec4}

In this section we consider the asymmetric case, i.e. we assume in \eqref{eq:setting2} that
\begin{align}\label{c}
\alpha, \, \beta \in (0,1) \quad \text{with } \quad \alpha\neq 1/2 \quad \text{and} \quad \beta=1-\alpha.
\end{align}
The following proposition describes all pairs $(F,G)$ of differentiable functions satisfying \eqref{eq:setting2} in the intervals where $g=G'$ does not vanish under the conditions~\eqref{c}.

\begin{prop}\label{cauchy1}
Let $(F,G)$ be a solution of  Problem~{\upshape\ref{3}} with $\alpha,\beta$ satisfying \eqref{c} and let $I=(p,q)\subseteq J $ be an interval where the derivative $G'$ does not vanish. Then $\{F, G, 1\}$ are linearly dependent on $I$.
\end{prop}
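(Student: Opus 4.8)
The plan is to reduce to the case where $F$ and $G$ are smooth and then extract, from the identity in Lemma~\ref{lem:elem}, a first-order differential consequence for the quotient $v=f/g$ which, thanks to the asymmetry $\alpha\neq\beta$, forces $v$ to be constant. First I would apply Proposition~\ref{pro1} to the interval $I$: either $\{1,F,G\}$ are already linearly dependent on $I$, in which case there is nothing to prove, or else $F$ and $G$ are infinitely differentiable on $I$. In the latter case, since $g=G'$ is smooth and non-vanishing on $I$ while $f=F'$ is smooth, the function $v=f/g$ is in particular continuously differentiable on $I$, which is exactly the regularity needed to differentiate \eqref{new-eq3}.

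Next I would start from \eqref{new-eq3}, namely $\beta g(a)\bigl(v(\alpha a+\beta b)-v(a)\bigr)=\alpha g(b)\bigl(v(b)-v(\alpha a+\beta b)\bigr)$, and differentiate both sides with respect to $b$ with $a$ held fixed, using $\partial_b\,v(\alpha a+\beta b)=\beta\,v'(\alpha a+\beta b)$ and the product rule on the right-hand side. Evaluating the result on the diagonal $b=a$ (where $\alpha a+\beta b=a$ since $\alpha+\beta=1$) makes every term carrying a factor $v(\alpha a+\beta b)-v(a)$ or $v(b)-v(\alpha a+\beta b)$ vanish, so that the surviving identity is $\beta^2 g(a)\,v'(a)=\alpha^2 g(a)\,v'(a)$, i.e. $(\beta^2-\alpha^2)\,g(a)\,v'(a)=0$ for every $a\in I$.

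The crucial point is then the asymmetry: since $\alpha,\beta\in(0,1)$ with $\alpha\neq 1/2$ we have $\alpha\neq\beta$ and hence $\alpha^2\neq\beta^2$, while $g(a)\neq 0$ on $I$ by hypothesis; therefore $v'(a)=0$ for all $a\in I$. As $I$ is an interval, $v\equiv c$ is constant, whence $f=cg$, i.e. $(F-cG)'=0$ on $I$ and thus $F=cG+d$ for some constant $d$. This is a nontrivial linear relation $F-cG-d\cdot 1=0$, so $\{F,G,1\}$ are linearly dependent on $I$, as claimed.

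I expect the only delicate point to be justifying the differentiation step, and this is precisely what Proposition~\ref{pro1} supplies by upgrading mere differentiability of $F,G$ to smoothness (hence differentiability of $v$) in the non-degenerate case; once this is in hand the computation is routine. It is worth noting that the whole mechanism collapses in the symmetric case $\alpha=\beta=1/2$, where $\alpha^2-\beta^2=0$ yields no information, which is consistent with the genuinely richer families of solutions obtained there in Section~\ref{sec5}.
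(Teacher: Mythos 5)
Your proof is correct, but it takes a genuinely more self-contained route than the paper. The paper's proof of Proposition~\ref{cauchy1} consists of two citations: Proposition~\ref{pro1} to upgrade $F,G$ to infinite differentiability on $I$ in the linearly independent case (this step you share verbatim), followed by an appeal to \cite[Proposition~6]{BIM}, together with a remark that the latter, stated there only for $J=\R$, extends to an arbitrary open interval $J$. You replace that black-box citation with a direct computation, and the computation checks out: with $g$ and $v=f/g$ smooth on $I$, differentiating \eqref{new-eq3} in $b$ and setting $b=a$ is legitimate; the $g'(b)$-term on the right is multiplied by $v(a)-v(a)=0$, and since $1-\beta=\alpha$ the surviving identity is $\beta^{2}g(a)v'(a)=\alpha^{2}g(a)v'(a)$, so $(\alpha-\beta)g(a)v'(a)=0$, and \eqref{c} together with $g\neq 0$ on $I$ forces $v'\equiv 0$, hence $f=cg$ and $F=cG+d$ on $I$. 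Your diagonal identity is in fact exactly the second $h$-derivative of \eqref{new-eq1} at $h=0$, namely $(\alpha^{2}-\beta^{2})\bigl[f'(x)g(x)-f(x)g'(x)\bigr]=0$, which is the mechanism by which the asymmetry acts, and your closing observation that everything degenerates precisely when $\alpha=\beta=\tfrac{1}{2}$ correctly explains why the symmetric case of Section~\ref{sec5} admits the richer quadratic, exponential and trigonometric families. What your route buys: the proposition becomes self-contained within this paper, with no need to adapt an external statement from $\R$ to $J$, and it visibly uses only two derivatives of $F$ and $G$ (though Proposition~\ref{pro1} supplies all orders anyway); what the paper's route buys is brevity and reuse of \cite{BIM}. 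One minor logical point, handled correctly but worth making explicit: since the dichotomy of Proposition~\ref{pro1} is an inclusive ``or,'' your smooth-case computation actually shows the linearly independent alternative cannot occur (independence would make $v$ nonconstant, contradicting $v'\equiv 0$), so linear dependence holds in all cases.
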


Proposition~\ref{pro1} implies that if $\{F,G,1\}$ are linearly independent on $I$, then $F$ and $G$ are infinitely differentiable on $I$. Combining this with \cite[Proposition~6]{BIM} we conclude the proof of Proposition~\ref{cauchy1}. 
We note that, although \cite[Proposition~6]{BIM} is formulated only to the case $J=\R$, the result can easily be generalized to an arbitrary open interval $J \subseteq \R$ just by substituting $\R$ to $J$ along the lines in the proof of \cite[Proposition~6]{BIM}.

The following theorem is the main result of this section.

\begin{thm}\label{thcauchy1}
Let $(F,G)$ be a solution of Problem~{\upshape\ref{3}} with $\alpha,\beta$ satisfying \eqref{c}. If $\inf (J)=-\infty$ or $\sup (J)=+\infty$, then $\{F, G, 1\}$ are linearly dependent on $J$, i.e.~there exist constants $c_1,c_2,c_3\in\R$ such that not all of them are zero and
\begin{equation}\label{lin.dep.1}
c_1F(x)+c_2G(x)+c_3=0 \quad \text{for all} \quad x\in J.
\end{equation}
\end{thm}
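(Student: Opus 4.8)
The plan is to reduce the statement to a case analysis on the sets $U_f$ and $U_g$ introduced at the start of Section~\ref{sec3}, and then to assemble the conclusion from the propositions already established. Throughout I work in Setting~\ref{setting2}, to which Problem~\ref{3} conforms, and I use freely that $J$ is semi-infinite, i.e.\ $\inf(J)=-\infty$ or $\sup(J)=+\infty$.

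First I would dispose of the degenerate cases. If $U_g=\emptyset$, then $g\equiv 0$ on $J$, so $G$ is constant and $\{F,G,1\}$ are trivially linearly dependent on $J$. If instead $U_g\neq\emptyset$ but $U_f\cap U_g=\emptyset$, then, since $J$ is semi-infinite, Proposition~\ref{prop:cauchy.gen.U_g<>empty} applies and yields $f\equiv 0$ on $J$; hence $F$ is constant and again $\{F,G,1\}$ are linearly dependent. This is precisely one of the two places where the semi-infiniteness of $J$ is indispensable, as Example~\ref{ex1} shows the conclusion can fail on a bounded interval.

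It remains to treat the main case $U_f\cap U_g\neq\emptyset$. Here I would invoke the decomposition \eqref{rep:U_g=sum.of.itvs} of $U_g$ into its connected components $\{I_\sigma\}_{\sigma\in\Sigma}$, each of which is an open subinterval of $J$ on which $g=G'$ does not vanish. On each such $I_\sigma$ the hypotheses of Proposition~\ref{cauchy1}, under the asymmetric assumption \eqref{c}, are met, so $\{F,G,1\}$ are linearly dependent on every $I_\sigma$. With these local dependences in hand and $J$ semi-infinite, Proposition~\ref{prop:X} applies directly and upgrades them to linear dependence of $\{F,G,1\}$ on all of $J$, which is the asserted relation \eqref{lin.dep.1}. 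The case distinction $U_g=\emptyset$ / $U_g\neq\emptyset,\ U_f\cap U_g=\emptyset$ / $U_f\cap U_g\neq\emptyset$ is exhaustive, so this completes the argument.

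The proof is thus a synthesis of the earlier results rather than a fresh computation, and there is no single hard calculation; the main point to watch is that the hypotheses of each proposition are genuinely available in the case to which it is applied. In particular, the linear dependences on distinct components $I_\sigma$ are a priori unrelated, and it is Proposition~\ref{prop:X} that does the real work of forcing a common relation — equivalently a single constant $c$ with $f=cg$ on $U_g$ — and then extending it across the coincident zero sets $Z_f=Z_g$ to the whole interval. Correspondingly, the semi-infiniteness of $J$ enters exactly twice, in Propositions~\ref{prop:cauchy.gen.U_g<>empty} and~\ref{prop:X}, and cannot be dropped.
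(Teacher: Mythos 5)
Your proposal is correct and follows essentially the same route as the paper's own proof: the identical three-way case split on $U_g=\emptyset$, then $U_g\neq\emptyset$ with $U_f\cap U_g=\emptyset$ (via Proposition~\ref{prop:cauchy.gen.U_g<>empty}), then $U_f\cap U_g\neq\emptyset$ handled by combining Proposition~\ref{cauchy1} on the components $I_\sigma$ with Proposition~\ref{prop:X}. Your closing remarks on where semi-infiniteness is genuinely used are accurate and consistent with the paper's Example~\ref{ex1}.
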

\begin{proof}
We consider three cases separately.

\medskip
\noindent
Case 1: $U_g = \emptyset$.
In this case $G$ is a constant on $J$ and \eqref{eq:setting2} holds for any differentiable function $F$. Hence \eqref{lin.dep.1} holds, for example, with $c_1=0$, $c_2=1$, $c_3=-G$ and thus $\{F, G, 1\}$ are linearly dependent on $J$.

\medskip
\noindent
Case 2: $U_g \neq \emptyset$ but $U_g \cap U_f=\emptyset$.
In this case Proposition~\ref{prop:cauchy.gen.U_g<>empty} yields that $F$ is a constant on $J$ and \eqref{eq:setting2} holds for any differentiable function $G$. Hence \eqref{lin.dep.1} holds, for example, with $c_1=1$, $c_2=0$, $c_3=-F$ and thus $\{F, G, 1\}$ are again linearly dependent on $J$.

\medskip
\noindent
Case 3: $U_g \cap U_f \neq \emptyset$.
In this case Propositions~\ref{cauchy1} and \ref{prop:X} directly imply that $\{F, G, 1\}$ are linearly dependent on ~$J$.
\end{proof}

\begin{rem}
   If $J$ is a bounded interval, then $\{1,F,G\}$ do not need to be linearly dependent on the whole of $J$. We refer the reader to the Example~\ref{ex1} above and also to \cite[Examples~13 and 14]{Lukasik}.
\end{rem}

\section{Main result for the symmetric case and the proof of Theorem~\ref{thm:sym.cauchy.main}}
\label{sec5}

In this section we consider the problem of describing all pairs $(F,G)$ of differentiable functions for which the mean value in \eqref{eq:setting2} is taken at the midpoint of the interval. Our first result gives a necessary (and also sufficient in the case when $\{F,G,1\}$ are linearly independent) condition on such pairs in the intervals where $g=G'$ does not vanish.

\begin{prop}
\label{cauchy2}
Assume that $F,G\colon J \to \R$ are differentiable functions with derivatives $F'=f$, $G'=g$. Let $I\subseteq J$ be  an interval such that $g\neq 0$ for all $x\in I$ and \eqref{eq:setting2} holds
for all $a,b \in I$. Then there exist constants $A, K\in\R$ and $x_{0}\in I$ such that
\begin{equation} \label{f,g}
f(x) = \bigg(A + K \int_{x_{0}}^{x}\frac{dt}{g^{2}(t)}\bigg) \, g(x), \quad \text{for all} \quad x \in I.
\end{equation}

\noindent
Moreover, if \eqref{f,g} holds with $K\neq0$, then on $I$, $g$ and thus $f$ have one of the forms of $Px+Q, Pe^{\mu x}+Qe^{-\mu x}$ and  $P\sin(\mu x)+Q\cos(\mu x)$, where $P,Q$ are real constants and $\mu >0$.
\end{prop}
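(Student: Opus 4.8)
The plan is to split the argument according to the dichotomy of Proposition~\ref{pro1}: on $I$, either $\{1,F,G\}$ are linearly dependent, or both $F$ and $G$ are infinitely differentiable. In the linearly dependent case there are constants $c_1,c_2,c_3$, not all zero, with $c_1F+c_2G+c_3\equiv0$ on $I$; differentiating gives $c_1f+c_2g\equiv0$. If $c_1=0$ then $c_2g\equiv0$, and since $g\neq0$ on $I$ this forces $c_2=c_3=0$, a contradiction; hence $c_1\neq0$ and $f=-(c_2/c_1)g$, which is \eqref{f,g} with $A=-c_2/c_1$ and $K=0$ (the integral term being irrelevant). So it remains to treat the smooth case, where I may differentiate freely.

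For the smooth case I would rewrite \eqref{new-eq1} in the symmetric setting $\alpha=\beta=\tfrac12$ (putting $h=2s$, $x=\tfrac{a+b}{2}$) as
\[
[F(x+s)-F(x-s)]\,g(x)=[G(x+s)-G(x-s)]\,f(x),
\]
valid for all $x\in I$ and all $s$ with $x\pm s\in I$. Since $f,g$ are smooth, I would differentiate this identity three times in $s$ and set $s=0$: the first derivative is trivial, the second vanishes because both sides are odd in $s$, and the third yields $f''(x)g(x)=f(x)g''(x)$ on $I$. This is exactly $(f'g-fg')'=f''g-fg''=0$, so the Wronskian $f'g-fg'\equiv K$ is constant on $I$. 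As $v=f/g$ satisfies $v'=(f'g-fg')/g^{2}=K/g^{2}$, integrating from a fixed $x_0\in I$ gives $v(x)=v(x_0)+K\int_{x_0}^{x}g^{-2}$, which is \eqref{f,g} with $A=v(x_0)$.

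For the second statement I assume $K\neq0$, so by the above we are in the smooth case. Differentiating the displayed identity once in $s$, dividing by $g(x)\neq0$, and substituting $f=vg$, I obtain the clean relation
\[
[v(x+s)-v(x)]\,g(x+s)+[v(x-s)-v(x)]\,g(x-s)=0.
\]
Inserting $v(x\pm s)-v(x)=K\int_x^{x\pm s}g^{-2}$ from the first part and cancelling $K\neq0$ leaves $\Psi(x+s)+\Psi(x-s)=0$ for all admissible $s$, where $\Psi(u):=g(u)R(u)$ and $R(u):=\int_x^{u}g^{-2}$. A direct computation gives the convenient identity $\Psi''=g''R$. The even symmetry of $\Psi(x+s)+\Psi(x-s)$ in $s$ forces all even $s$-derivatives at $s=0$ to vanish; the lower orders are automatic (consistent with $v'g^{2}\equiv K$), while the fourth derivative reduces to $g'''(x)g(x)-g''(x)g'(x)=0$ on $I$, i.e.\ $(g''/g)'\equiv0$. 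Hence $g''=cg$ for a constant $c$, and solving this linear ODE gives $g=Px+Q$ when $c=0$, $g=Pe^{\mu x}+Qe^{-\mu x}$ when $c=\mu^{2}>0$, and $g=P\sin(\mu x)+Q\cos(\mu x)$ when $c=-\mu^{2}<0$. Finally, $f''g=fg''=cfg$ together with $g\neq0$ gives $f''=cf$, so $f$ satisfies the same equation and has the same form.

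I expect the main obstacle to be the second part: extracting a usable ODE for $g$ from the functional equation. The key is recognizing that, once the integral formula for $v$ is inserted, the first-order $s$-relation collapses to the odd-symmetry condition $\Psi(x+s)=-\Psi(x-s)$, and that the nontrivial information sits precisely in the fourth-order $s$-coefficient, all lower even orders being forced to vanish automatically. Verifying that this fourth-order identity is exactly $(g''/g)'=0$, and hence $g''=cg$, is the crux of the argument.
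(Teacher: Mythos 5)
Your proof is correct, and it is worth noting where it coincides with and where it departs from the paper. The opening reduction is exactly the paper's: invoke the dichotomy of Proposition~\ref{pro1}, dispose of the linearly dependent case by showing $v=f/g$ is constant (so \eqref{f,g} holds with $K=0$), and use linear independence to get infinite differentiability. But at that point the paper simply cites \cite[Proposition~8]{BIM} (suitably extended from $\R$ to $J$) for both the representation \eqref{f,g} and the classification of $g$, whereas you prove that cited content from scratch. Your third $s$-derivative at $s=0$ of $[F(x+s)-F(x-s)]g(x)=[G(x+s)-G(x-s)]f(x)$ gives $f''g=fg''$, hence a constant Wronskian $f'g-fg'\equiv K$ and $v'=K/g^{2}$, which integrates to \eqref{f,g}; your first $s$-derivative gives $[v(x+s)-v(x)]g(x+s)+[v(x-s)-v(x)]g(x-s)=0$, which is precisely Lemma~\ref{lem:elem} specialized to $\alpha=\beta=\tfrac12$; and the device $\Psi=gR$ with $R(u)=\int_x^u g^{-2}$, for which indeed $\Psi'=g'R+1/g$ and $\Psi''=g''R$, makes the fourth-order coefficient of $\Psi(x+s)+\Psi(x-s)\equiv0$ collapse (using $R(x)=0$, so $\Psi''''(x)=2g^{-3}(g'''g-g''g')$ at the base point) to $g'''g-g''g'=0$, i.e.\ $(g''/g)'\equiv0$, whence $g''=cg$ and the three stated forms; $f''=cf$ then follows from $f''g=fg''$, so $f$ has the same type with the same $\mu$. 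I verified these computations and they are sound; what your route buys is a self-contained proof of the very step the paper outsources, at the cost of length, while the paper's citation buys brevity but leaves the reader to adapt \cite{BIM}. Two small blemishes to fix in writing it up: the sentence ``the even symmetry forces all even $s$-derivatives at $s=0$ to vanish'' is backwards --- evenness makes the \emph{odd} derivatives vanish automatically, and it is the identity $\Psi(x+s)+\Psi(x-s)\equiv0$ itself that kills the even ones, which is what your computation actually uses; and you should state explicitly that $K\neq0$ rules out linear dependence (dependence forces $v$ constant, while \eqref{f,g} with $K\neq0$ gives $v'=K/g^{2}\neq0$), so that Proposition~\ref{pro1} legitimately supplies the four derivatives of $g$ needed at fourth order --- your ``by the above'' gestures at this but deserves a full line, as does the routine remark that the $s$-differentiations are performed at interior points of $I$ and the resulting identities extend to $I$ by continuity.
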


If $\{1,F,G\}$ are linearly dependent on $I$, we choose $K=0$ in \eqref{f,g}. If $\{1,F,G\}$ are linearly independent on $I$, Proposition~\ref{pro1} implies that $F$ and $G$ are infinitely differentiable on $I$. Using this and \cite[Proposition~8]{BIM} we conclude the proof of Proposition~\ref{cauchy2}. Similarly to our last remark above, strictly speaking, \cite[Proposition~8]{BIM} is applicable only to the case $J = \R$. However, one can  see that this result can easily be generalized to an arbitrary $J \subseteq \R$ just by substituting $\R$ to $J$ along the lines in the proof of \cite[Proposition~8]{BIM}. Hence, on $I$ the function  $G$ has one of the following forms
\begin{align}\label{G_quad}
G(x)&=c_1x^2+c_2x+c_3,\\\label{G_exp}
G(x)&=c_1e^{\mu x}+c_2e^{-\mu x}+c_3, \quad \mu>0,\\\label{G_trig}
G(x)&=c_1\sin(\mu x)+c_2\cos(\mu x)+c_3, \quad \mu>0,
\end{align}
where $c_1,c_2,c_3$ are real constants. Altogether, we come to the following conclusion.

\begin{rem}
\label{rem:cauchy.sym}
On every interval $I\subseteq J$ on which $G'\neq 0$, either $\{F, G, 1\}$ are linearly dependent, or $G$ and thus also $F$ has one of the forms described in \eqref{G_quad}--\eqref{G_trig}.
\end{rem}
In the sequel, we call a function $G$ (respectively the pair $(F,G)$) to be of \textit{quadratic, exponential} or \textit{trigonometric type }on $I$ if $G$ has (respectively both of $F$ and $G$ have) the form \eqref{G_quad}, \eqref{G_exp} or \eqref{G_trig}, respectively.

\begin{lem}\label{qet.qet}
If $(F,G)$ are of quadratic type on some $I_{\sigma}\subset U_g$, then they are of quadratic type on the whole of $J$. This statement holds also for functions of exponential and trigonometric types.
\end{lem}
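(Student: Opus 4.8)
The plan is to show that the type (quadratic, exponential, or trigonometric) propagates from a single interval to all of $J$ by a continuation argument. Throughout write $f=F'$, $g=G'$. On $I_\sigma\subset U_g$ the pair $(F,G)$ coincides with fixed functions $\tilde F,\tilde G$ of the prescribed type, e.g.\ $\tilde F,\tilde G\in\mathrm{span}\{1,x,x^2\}$ in the quadratic case; since these are real-analytic, we may regard $\tilde F,\tilde G$ (and $\tilde f=\tilde F'$, $\tilde g=\tilde G'$) as defined on all of $J$. I will assume $\{1,F,G\}$ are linearly independent on $I_\sigma$, which is the case in which the type is meaningful as the alternative to linear dependence (cf.\ Remark~\ref{rem:cauchy.sym}). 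Then $v=f/g=\tilde f/\tilde g$ is real-analytic on $I_\sigma$ and nonconstant there (otherwise $\tilde f=c\,\tilde g$, forcing linear dependence of $\{1,\tilde F,\tilde G\}$), hence nonconstant on every subinterval of $I_\sigma$. The goal reduces to: if $F\equiv\tilde F$, $G\equiv\tilde G$ on an interval $(p,q)$ with $g\neq 0$ there, then the same holds on a strictly larger interval; a connectedness argument then gives all of $J$.

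The key tool is a symmetric identity valid on all of $J$. In the symmetric case $\alpha=\beta=\tfrac12$, rewriting \eqref{eq:setting2} as $[F(x+\tfrac{h}{2})-F(x-\tfrac{h}{2})]\,g(x)=[G(x+\tfrac{h}{2})-G(x-\tfrac{h}{2})]\,f(x)$ and differentiating with respect to $h$ (which uses only that $F,G$ are differentiable, since $f(x),g(x)$ are constant in $h$), then writing $u=x-\tfrac{h}{2}$, $w=x+\tfrac{h}{2}$, $m=\tfrac{u+w}{2}$, yields for all $u,w\in J$ the identity
\begin{equation*}
[f(u)+f(w)]\,g(m)=[g(u)+g(w)]\,f(m),
\end{equation*}
which I denote by $(\star)$. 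A direct computation (equivalently, the fact noted in the remark after Theorem~\ref{thm:sym.cauchy.main} that functions of the three types are genuine solutions) shows that $\tilde F,\tilde G$ also satisfy $(\star)$ on all of $J$.

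For the local extension step, fix $w$ with $q<w<2q-p$ and let $m$ range over the nonempty subinterval of $I_\sigma$ for which $u:=2m-w\in I_\sigma$ as well. For such $m,u$ one has $f(u)=\tilde f(u)$, $g(u)=\tilde g(u)$, $f(m)=\tilde f(m)$, $g(m)=\tilde g(m)$, so subtracting $(\star)$ for $(F,G)$ from $(\star)$ for $(\tilde F,\tilde G)$ and cancelling the common $u$-terms gives
\begin{equation*}
[f(w)-\tilde f(w)]\,g(m)=[g(w)-\tilde g(w)]\,f(m)
\end{equation*}
for all admissible $m$. If $g(w)\neq\tilde g(w)$, this forces $v(m)=f(m)/g(m)$ to be constant in $m$, contradicting the nonconstancy of $v$; hence $g(w)=\tilde g(w)$, and then $g(m)\neq 0$ gives $f(w)=\tilde f(w)$. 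Thus $f\equiv\tilde f$ and $g\equiv\tilde g$ on $(q,2q-p)$, and integrating from $q$ (where $F=\tilde F$, $G=\tilde G$ by continuity) yields $F\equiv\tilde F$, $G\equiv\tilde G$ there; the mirror choice of $w$ to the left enlarges the agreement to $(2p-q,2q-p)\supsetneq(p,q)$. Crucially, this step never divides by $g(w)$ — only by $g(m)$ with $m\in I_\sigma$ — so it carries the type across whatever zeros of $g$ the enlarged interval may contain.

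To globalize I use a supremum argument in the spirit of Lemma~\ref{lemma:set} and Proposition~\ref{prop:cauchy.gen.U_g<>empty}. Fixing $q_0\in I_\sigma$, let $b^\ast$ be the supremum of all $t\in J$ with $F\equiv\tilde F$, $G\equiv\tilde G$ on $[q_0,t]$; by continuity the agreement holds up to $b^\ast$. If $b^\ast<\sup J$, then since $\tilde g$ has only isolated zeros I may pick a small $I'=(b^\ast-\varepsilon,\,b^\ast-\delta)$ with $0<\delta<\varepsilon/2$ on which $g=\tilde g\neq 0$ (and on which $\{1,F,G\}$ are still independent, $\tilde F,\tilde G$ being analytic), apply the local extension step with $I'$ in place of $(p,q)$, and reach the point $b^\ast+\varepsilon-2\delta>b^\ast$, contradicting the definition of $b^\ast$. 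Hence $b^\ast=\sup J$, and the analogous argument to the left gives $F\equiv\tilde F$, $G\equiv\tilde G$ on all of $J$. The exponential and trigonometric cases are identical: the identity $(\star)$ and its validity for $\tilde F,\tilde G$ hold verbatim, $v$ is again nonconstant, and $\tilde g$ still has only isolated zeros, so the same continuation applies. The one delicate point, handled above, is precisely the passage across the zeros of $g$, which is what makes direct comparison with the analytic continuations $\tilde F,\tilde G$ — rather than any argument dividing by $g$ — essential.
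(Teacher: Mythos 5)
Your proof is correct, and while it shares the paper's overall skeleton (extend the closed form a bit past the endpoint of $I_\sigma$, then iterate), the engine of the local step is genuinely different. The paper substitutes the explicit closed forms into the undifferentiated equation \eqref{qet}, differentiates with respect to both $h$ and $x$, and for $x$ near $q$ with $h=\varepsilon$ literally solves the resulting identity for the unknown boundary value $G(x+h)$ --- a step that needs the coefficient $c_5\,g'(x)/g(x)^2$ to be nonzero, and whose passage across zeros of $g$ is dispatched only by the informal remark that each type has at most one (resp.\ locally finitely many) critical points. You instead work with the once-differentiated identity $(\star)$, i.e.\ $[f(u)+f(w)]\,g(m)=[g(u)+g(w)]\,f(m)$, which is the global $\alpha=\beta=\tfrac12$ version of the computation in Lemma~\ref{lem:elem}, compare $(F,G)$ against the entire-function continuation $(\tilde F,\tilde G)$ (which satisfies $(\star)$ as well), and extract $f(w)=\tilde f(w)$, $g(w)=\tilde g(w)$ by the subtraction trick together with nonconstancy of $v=f/g$ on subintervals, secured by the identity theorem. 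Your route buys three things: it is uniform across the quadratic, exponential and trigonometric cases; it never divides by anything evaluated outside the base interval, so crossing zeros of $g$ reduces to the isolatedness of zeros of the analytic $\tilde g$ plus a clean supremum argument --- exactly the point the paper's ``critical points cannot disturb us'' sentence leaves vague; and it makes explicit the linear-independence hypothesis that the paper's proof uses tacitly (its solving step degenerates when $c_5=0$), which is consistent with how the lemma is actually invoked in the proof of Theorem~\ref{thm:sym.cauchy.main}. What the paper's route buys in exchange is elementarity: it needs no analytic continuation or identity theorem, reading the new values of $G$ directly off the functional equation.
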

\begin{proof}
Note from Proposition~\ref{pro1} that linearly independent pairs of functions $(F,G)$ can be of \textit{quadratic, exponential} or \textit{trigonometric type} on some $I_{\sigma}\subset U_g$ only when $\alpha=\beta=1/2$. Let $I=(p,q)\subset U_g$ be an interval where $(F,G)$ are of \textit{quadratic type}. Substituting $\frac{a+b}{2} \mapsto x, \frac{b-a}{2} \mapsto h$, \eqref{eq:setting2} reads as
\begin{equation}\label{qet}
[F(x+h)-F(x-h)]g(x)=[G(x+h)-G(x-h)]f(x),
\end{equation}
for all $x\in (p,q)$, $h\in \R$ such that $x-h, x+h\in J$. Without loss of generality we may assume that $G(x)=c_1x^2+c_2x+c_3$ with $c_1\neq 0$ for $x\in I$. Then we set from \eqref{f,g} that $F(x)=c_4G(x)+c_5x+c_6$ for $x\in I$. Putting these to \eqref{qet}, we obtain
\begin{equation*}
F(x+h)-F(x-h)=[G(x+h)-G(x-h)]\bigl(c_4+\tfrac{c_5}{g(x)}\bigr)
\end{equation*}
for all $x\in (p,q)$ and $h\in \R$ such that $x-h, x+h\in J$. Differentiating both sides with respect to $h$ and $x$ simultaneously gives
\begin{equation*}
f(x+h)+f(x-h)=(g(x+h)+g(x-h))\big(c_4+\tfrac{c_5}{g(x)}\big)
\end{equation*}
and
\begin{center}
$f(x+h)-f(x-h)=(g(x+h)-g(x-h))\big(c_4+\frac{c_5}{g(x)}\big)-(G(x+h)-G(x-h))g'(x)\frac{c_5}{g(x)^2}$
\end{center}
for all $x\in (p,q)$ and $h\in \R$ such that $x+h,x-h \in J$. Using the last two equations we get
\begin{center}
$2f(x-h)=2g(x-h)\bigl( c_4+\frac{c_5}{g(x)}\bigr)+(G(x+h)-G(x-h))g'(x)\frac{c_5}{g(x)^2}$
\end{center}
for all $x\in (p,q)$ and $h\in \R$ such that $x-h, x+h\in J$. We fix $0<\varepsilon <\frac{q-p}{2}$ and choose $h=\varepsilon$. Then $x-h \in (p,q)$ and $x+h\in (q,q+\varepsilon)$ for all $x\in (q-\varepsilon,q) \subset (p,q)$. Hence, from the last equation we obtain $G(x+h)=c_1(x+h)^2+c_2(x+h)+c_3$ for all $x\in (q-\varepsilon,q) \subset (p,q)$. This together with \eqref{f,g} imply that $(F,G)$ are of \textit{quadratic type} on $(p,q+\varepsilon)$. The critical points of $F$ and $G$ cannot disturb us because every function of \textit{quadratic type} is continuous and has at most one critical point. Hence, using the above argument, we can extend the interval $(p,q)$ to the whole of $J$.

All functions of \textit{exponential type} has also at most one critical point, thus the above argument apply for them as well. Every function of \textit{trigonometric type} is continuous and has a finite number of critical points in every finite subinterval $(p,q)$ of $J$. Hence, using the above argument, we again complete the proof.
\end{proof}


Now we are ready to  give the proof of Theorem~\ref{thm:sym.cauchy.main}.

\paragraph{Proof of Theorem~\ref{thm:sym.cauchy.main}.} First, we describe the solutions of Problem~\ref{3}. Consider the set $U_g$ defined in \eqref{U_f,U_g}. If $U_g=\emptyset$, then $g\equiv 0$ on $J$, and thus $G$ is constant on $J$. In this case $F$ can be an arbitrary differentiable function on $J$ and thus $\{F,G,1\}$ are linearly dependent on $J$. Next, let us assume that $U_g\neq \emptyset$ and consider the representation \eqref{rep:U_g=sum.of.itvs}. If $\alpha \neq 1/2$, then Proposition~\ref{cauchy1} and Theorem~\ref{thcauchy1} imply that $\{F,G,1\}$ are linearly dependent on every interval $I\subset U_g$ or if $J$ is semi-infinite interval, they are linearly dependent on the whole of $J$. So, we can focus on the symmetric case of $\alpha=\beta=1/2$. It is clear that if $\{F,G,1\}$ are linearly independent on $I_{\sigma}$ for some $\sigma \in \Sigma$, then Remark~\ref{rem:cauchy.sym} and Lemma~\ref{qet.qet} imply that $F,G$ have only one of the \textit{quadratic, exponential} or \textit{trigonometric types} on whole of $J$. Otherwise, we are left only with the case $\{F,G,1\}$ being linearly dependent on every interval $I\subset U_g$ which was already considered above.
The solutions of the Problem~\ref{3} directly yield the proof of the Theorem~\ref{thm:sym.cauchy.main}. \qed

\smallskip



\bibliographystyle{acm}
\bibliography{bibfile}

\end{document}